\newcommand{\mc}{\mathcal}
\newcommand{\mb}{\mathbb}
\DeclareMathOperator*{\argmin}{argmin}
\begin{document}

\newtheorem{tem}{Theorem}
\newtheorem{lem}[tem]{Lemma}
\newtheorem{cor}[tem]{Corollary}

\title[Realizability as Matrix Completion]{Realizability of Polytopes as a Low Rank Matrix Completion Problem}
\author{Michael Gene Dobbins}
\address{ GAIA \\ POSTECH \\ Daejeon and Pohang, South Korea }
\email{michaelgenedobbins@gmail.com}

\begin{abstract}
This article gives necessary and sufficient conditions for a relation to be the containment relation between the facets and vertices of a polytope.  Also given here, are a set of matrices parameterizing the linear moduli space and another set parameterizing the projective moduli space of a combinatorial polytope. 
\end{abstract}
\maketitle

\section{Introduction}

Given a polytope, there are various kinds of data we could use to describe it.  The purpose of this article is to go in the other direction.  Given some data, determine if there exists a polytope realizing this data.  
An early realizability result for polytopes was given by Steinitz, who showed that a graph gives the edges and vertices of some 3 dimensional polytope if and only if the graph is planer and 3-connected \cite{steinitz}. 
In higher dimensions, 
there are known impediments to generalizing this to purely combinatorial conditions for when a poset can be realized as the face lattice of a polytope \cite{RG}. 
There are, however, known algebraic conditions for determining when a given poset is realizable, which come directly from the definition of a face \cite{grun}. 
When such a realization exists, we say the poset is a combinatorial polytope, and say 
the class of such polytopes have the same combinatorial type.

The main result of this article, 
Theorem \ref{tem:rmcp} in Section \ref{sec:fvm}, gives necessary and sufficient conditions for any given relation to be the containment relation between the facets and vertices of a polytope. 
These conditions come in two parts, a combinatorial part and an algebraic part.  
For the combinatorial part, we present a combinatorial construction to get the face lattice back from the facet-vertex incidence 
and give conditions in terms of this lattice. 
For the algebraic part, 
we associate a matrix to any centered polytope, which we call the facet-vertex matrix.  The entries of this matrix are the inner product of the vertices of the polytope with the vertices of its polar polytope.  
Theorem \ref{tem:rmcp} gives conditions for a matrix to be the facet-vertex matrix of the desired polytope, and shows that facet-vertex matrices parameterize the space of centered realizations of a combinatorial type of polytope modulo linear transformations, which we call the linear moduli space. 
In contrast to the realizability conditions we get from the definition of a face, Theorem \ref{tem:rmcp} gives simpler algebraic conditions at the expense of adding some combinatorial conditions.

For any polytope, we have the cone of homogeneous coordinates over the polytope in a space one dimension higher, and we can get a projectively equivalent polytope back from that cone by intersecting it with an appropriately chosen affine hyperplane.  This makes the problem of realizing a combinatorial type of polytope equivalent to the problem of realizing the corresponding combinatorial type of cone.  
We associate a similarly defined facet-ray matrix to any polytopal pointed cone, and in this way, Theorem \ref{tem:rmcc} in Section \ref{sec:fvm}, will give conditions for a relation to be the facet-vertex incidence of a polytope in terms of a facet-ray matrix of a cone over that polytope, and shows that such matrices parameterize the projective moduli space of a combinatorial type of polytope, its realization space modulo projective transformations.

The algebraic conditions of Theorem \ref{tem:rmcp} 
are the same as the conditions Robertson gave 
for when a small perturbation of a polytope's facet-vertex matrix is the facet-vertex matrix of a perturbed polytope of the same combinatorial type, 
which he used to count the dimension of a polytope's realization space \cite{robertson}.  
The algebraic conditions of Theorem \ref{tem:rmcc} are the same as what D\'iaz gave to characterize the facet-ray matrices of cones of a given a combinatorial type. 
The results of Robertson and D\'iaz, however, both assume we start with a combinatorial polytope,   
so we cannot use these to decide if some given combinatorial data can be realized.

The combinatorial conditions appearing in Theorem \ref{tem:rmcp} are given in terms of the Dedekind-MacNeille completion of the facets and vertices \cite{introlat}, which we call the maxbiclique lattice.  These conditions are related to those of abstract polytopes,  
which are posets that satisfy certain properties that hold for all combinatorial polytopes.  
Abstract polytopes are presented in \cite{arp}, and are generally of interest for the symmetry groups of regular abstract polytopes. 
If one wants to consider purely combinatorial objects resembling a combinatorial polytope, there are more faithful objects one may consider, such as shellable lattices \cite{zieg} or Eulerian lattices \cite{enumcomb}.  
Many other related constructions can be found 
in \cite{paffen}, which presents combinatorial operations on a hierarchy of such objects, 
but here we are only interested in the minimal properties we need for Theorem $\ref{tem:rmcp}$.  These are really properties of a poset's flag graph, which have been characterized by Peterin \cite{peterin}.

The motivation D\'iaz gave for characterizing the facet-ray matrix of a combinatorial type of cone was for use as a lemma to characterize the Gramian matrix of combinatorial polytopes in other geometric spaces, such as spherical or hyperbolic space.  In Section \ref{sec:gram}, Theorem \ref{tem:rmcc} is used instead to give realizability conditions in terms of a polytope's Gramian. 
The Gramian has the geometric appeal that its entries 
are the cosine of the dihedral angles of the polytope.

Section \ref{sec:def1} gives basic definitions needed for Section \ref{sec:fvm} with new, less common, or otherwise ambiguous terms in bold.  
Section \ref{sec:fvm} gives necessary and sufficient conditions for a relation to be realizable and shows that the facet-vertex(ray) matrices parameterize the moduli spaces of polytopes. Section \ref{sec:def2} gives more definitions needed for Section \ref{sec:gram}, which gives other realizability conditions in terms of the Gramian of a polytope in a geometric space.  Finally, Section \ref{sec:rem} ends with some remarks about complexity and the Gale dual.

\section{Definitions}\label{sec:def1}

A poset is called a lattice when every pair of elements $i,j$ has a meet $i \wedge j$ and a join $i\vee j$,  which are respectively the greatest lower bound and least upper bound of the pair. 
A meet irreducible is an element of a lattice that cannot be expressed as the meet of other elements, and a join irreducible is the order dual of that.  
The greatest and least elements are denoted $\top$ and $\bot$ respectively. 
The faces $f$ of a polytope $P$ are subsets of $P$ where some affine inequality $\left<a,x\right> \leq b$ that is satisfied for every $x\in P$, holds as an equality $f = \{x \in P: \left<a,x\right> = b\}$, and the face lattice of $P$ is the poset of its faces ordered by containment.  
The meet and join irreducibles of the face lattice are the facets and vertices respectively.

Here we consider a relation $\mc{R} \subset I\times J$ to be a subset of the product of two sets and, where convenient, use terminology for bipartite graphs. 
A {\bf filled $x$-incidence matrix} $M$ of a relation ${\mc{R} \subset I\times J}$ between row indices $I$ and column indices $J$, has $[M]_{i,j} = x$ for $(i,j) \in \mc{R}$ and $[M]_{i,j} < x$ for $(i,j) \not\in \mc{R}$. 
An {\bf induced biclique} $(I_0,J_0)$ of a relation is a pair of sets $I_0 \subset I$, $J_0 \subset J$ such that every element of one part is incident to every element of the other part ${I_0 \times J_0 \subset \mc{R}}$.  A {\bf maxbiclique} of a relation is a maximal induced biclique.  The {\bf maxbiclique lattice} of a relation with one part $J$ specified as the lower is the poset consisting of maxbicliques ordered by containment of elements in the lower part.
$(I_0,J_0) \leq (I_1,J_1) := J_0 \subset J_1$. 
Lemma \ref{lem:mbc} in the next section says that finding the irreducibles of a finite lattice and finding the maxbiclique lattice of the incidence relation between irreducibles are inverse operations.

For now, a polytope is the convex hull of finitely many points in $\mb{R}^d$.  In Section \ref{sec:def2}, we will refer to more general objects as polytopes.  A polytope is centered when it contains the origin, and 
the {\bf covertices} of a full-dimensional centered polytope $P$ are the vertices of its polar polytope 
\[{P^* = \{ y:\: \forall x\in P, \left<y,x\right> \leq 1 \}}.\] 
We may think of the polar as consisting of vectors in dual space representing half-spaces that contain $P$, 
and the covertices as representing the supporting hyperplanes of the facets.  
A full-dimensional centered polytope $P$ with vertices $w_j$ and covertices $h_i$, is given by 
\[P =\left\{ \sum_{j\in J} t_j w_j :\: t_j \geq 0, \sum_{j\in J} t_j = 1 \right\} = \{ x :\: \forall i \in I, \left<h_i,x\right> \leq 1 \},  \] 
and the {\bf facet-vertex matrix} $M$ of $P$ 
has entries $[M]_{i,j}= \left<h_i,w_j \right>$.  Figure \ref{fig:fvm} gives an example of this matrix.  
Note that the transpose of the facet-vertex matrix of a centered polytope is the facet-vertex matrix of its polar polytope, and that we use the convention that rows correspond to covertices and columns to vertices, since these are respectively covariant and counter-variant under change of coordinates.  
A type $\mc{R}$ polytope has facets and vertices indexed by $I$ and $J$ and facet-vertex incidence $\mc{R}$, and its realization space consists of $x\in\mb{R}^{|J|\times d}$ such that $([x]_{j,1},\dots,[x]_{j,d})$ is the $j^{\rm th}$ vertex of a type $\mc{R}$ polytope. 
Clearly the facet-vertex matrix of a type $\mc{R}$ polytope is a rank $d$ filled 1-incidence matrix of $\mc{R}$.  Theorem \ref{tem:rmcp} will give conditions on $\mc{R}$ for a filled 1-incidence matrix to be a facet-vertex matrix. 

\begin{figure}[h]
\begin{center}
\begin{tikzpicture}

\begin{scope}[scale=1.5]

\path
 (0,0.5) coordinate (o)
 (-1.3,-0.9) coordinate (a) 
 (-0.7,-0.1) coordinate (b)
 (1.3,-0.1) coordinate (c)
 (0.7,-0.9) coordinate (d)
 (0,1.5) coordinate (e)
 (-1,1.5) coordinate (l); 

\path
 (2,1.5) coordinate (f)
 (0.6,2.3) coordinate (g)
 (-2,1.5) coordinate (h)
 (-0.6,0.7) coordinate (i)
 (0,-0.5) coordinate (j);

\end{scope}

\matrix[ampersand replacement=\&, row sep=10pt, column sep=-40pt]
{

\draw[very thick]
 (a) -- (b) -- (c) -- (d) -- cycle
 (a) -- (e)
 (b) -- (e)
 (c) -- (e)
 (d) -- (e); 

\node [below, xshift=16pt] at (a) {\small (-1,-1,-1)};
\node [below, xshift=16pt] at (b) {\small (-1,1,-1)};
\node [below, xshift=16pt] at (c) {\small (1,1,-1)};
\node [below, xshift=16pt] at (d) {\small (1,-1,-1)};
\node [above] at (e) {\small (0,0,1)}; \&

\draw[very thick]
 (f) -- (g) -- (h) -- (i) -- cycle
 (f) -- (j)
 (g) -- (j)
 (h) -- (j)
 (i) -- (j);

\draw
 (a) -- (b) -- (c) -- (d) -- cycle
 (a) -- (e)
 (b) -- (e)
 (c) -- (e)
 (d) -- (e); 

\node [above, xshift=-32pt, yshift=-6pt] at (f) {\small (2,0,1)};
\node [above] at (g) {\small (0,2,1)};
\node [above, xshift=-6pt, yshift=2pt] at (h) {\small (-2,0,1)};
\node [above, xshift=-4pt, yshift=8pt] at (i) {\small (0,-2,1)};
\node [below] at (j) {\small (0,0,-1)}; \\

\draw[very thick, fill=gray!50]
 (c) -- (d) -- (e) -- cycle;

\draw 
 (f) -- (g) -- (h) -- (i) -- cycle
 (f) -- (j)
 (g) -- (j)
 (h) -- (j)
 (i) -- (j);

\draw
 (a) -- (b) -- (c) -- (d) -- cycle
 (a) -- (e)
 (b) -- (e)
 (c) -- (e)
 (d) -- (e);

\draw[very thick, ->]
 (o) -- (f); \&

\node at (o) 
{$\left[\begin{array}{ccccc}
\bf 1 & \bf 1 & \bf -3 & \bf -3 & \bf 1 \\
-3 & 1 & 1 & -3 & 1 \\
-3 & -3 & 1 & 1 & 1 \\
1 & -3 & -3 & 1 & 1 \\
1 & 1 & 1 & 1 & -1 \\
\end{array}\right]$}; \\
};

\end{tikzpicture}

\caption{
Top from left, a 3-polytope and its polar with vertex coordinates; bottom, a covertex with the corresponding facet shaded and the facet-vertex matrix with the corresponding row in bold.}\label{fig:fvm}
\end{center}
\end{figure}
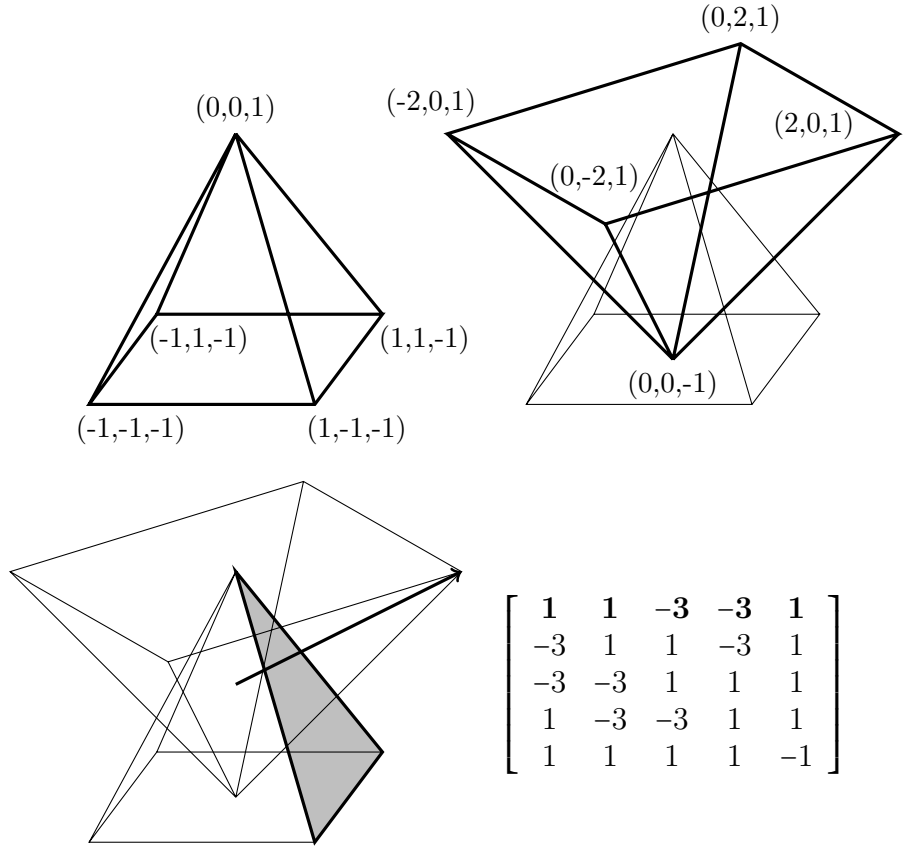

A {\bf cone} $C$ is the set of positive linear combinations of vectors in $\mb{R}^{d+1}$ that does not contain a line.  That is, by `cone' we mean what may be called a pointed polytopal cone with vertex at the origin.  A set of generators of $C$ is a minimal set of vectors such that $C$ consists of positive linear combinations of these vectors, and a set of {\bf cogenerators} is a set of generators of its polar cone 
\[C^* = \{ y:\: \forall x\in C, \left<y,x\right> \leq 0 \}. \] 
A full-dimensional cone $C$ with generators $w_j$ and cogenerators $h_i$, is given by 
\[ C= \left\{ \sum_{j\in J} t_j w_j :\: t_j \geq 0 \right\} = \{ x :\: \forall i\in I, \left<h_i,x\right> \leq 0 \},  \] 
and a {\bf facet-ray matrix} $N$ of $C$ 
has entries $[N]_{i,j} = \left<h_i,w_j\right>$.  
Note that the generators and cogenerators of a cone are defined up to positive scaling of each vector.  As such, a cone does not have a unique facet-ray matrix. 

\begin{figure}[h]
\begin{center}
\begin{tikzpicture}

\matrix[row sep=10pt, column sep=10pt]
{

\node[below] at (l) {a)};

\fill[gray!50]
 (c) -- (d) -- (e) -- cycle;

\draw
 (a) -- (b) -- (c) -- (d) -- cycle
 (a) -- (e)
 (b) -- (e)
 (c) -- (e)
 (d) -- (e); 

\draw[ultra thick]
 (d) -- (e);

\fill
 (d) circle (0.1); &

\node[below] at (l) {b)};

\fill[gray!50]
 (c) -- (d) -- (e) -- cycle;

\draw
 (a) -- (b) -- (c) -- (d) -- cycle
 (a) -- (e)
 (b) -- (e)
 (c) -- (e)
 (d) -- (e); 

\draw[ultra thick]
 (d) -- (e);

\fill
 (e) circle (0.1); \\

\node[below] at (l) {c)};

\fill[gray!50]
 (c) -- (d) -- (e) -- cycle;

\draw
 (a) -- (b) -- (c) -- (d) -- cycle
 (a) -- (e)
 (b) -- (e)
 (c) -- (e)
 (d) -- (e); 

\draw[ultra thick]
 (d) -- (c);

\fill
 (d) circle (0.1); &

\node[below] at (l) {d)};

\fill[gray!50]
 (a) -- (d) -- (e) -- cycle;

\draw
 (a) -- (b) -- (c) -- (d) -- cycle
 (a) -- (e)
 (b) -- (e)
 (c) -- (e)
 (d) -- (e); 

\draw[ultra thick]
 (d) -- (e);

\fill
 (d) circle (0.1); \\
};

\end{tikzpicture}

\begin{tikzpicture}

\begin{scope}[scale=2/3]

\path
 (-2,-1) coordinate (eab)
 (-2,1)  coordinate (eba)
 (-1,2)  coordinate (ebc)
 (1,2)   coordinate (ecb)
 (2,1)   coordinate (ecd)
 (2,-1)  coordinate (edc)
 (1,-2)  coordinate (eda)
 (-1,-2) coordinate (ead)

 (4,6) coordinate (cbj)
 (3,5) coordinate (cbe)
 (3,4) coordinate (ceb)
 (4,3) coordinate (ced)
 (5,3) coordinate (cde)
 (6,4) coordinate (cdj)

 (4,-6) coordinate (daj)
 (3,-5) coordinate (dae)
 (3,-4) coordinate (dea)
 (4,-3) coordinate (dec)
 (5,-3) coordinate (dce)
 (6,-4) coordinate (dcj)

 (-4,-6) coordinate (adj)
 (-3,-5) coordinate (ade)
 (-3,-4) coordinate (aed)
 (-4,-3) coordinate (aeb)
 (-5,-3) coordinate (abe)
 (-6,-4) coordinate (abj)

 (-4,6) coordinate (bcj)
 (-3,5) coordinate (bce)
 (-3,4) coordinate (bec)
 (-4,3) coordinate (bea)
 (-5,3) coordinate (bae)
 (-6,4) coordinate (baj);

\end{scope}

\draw[very thick, loosely dotted]
 (abe) -- (bae)
 (bce) -- (cbe)
 (cde) -- (dce)
 (dae) -- (ade)

 (abj) -- (baj)
 (bcj) -- (cbj)
 (cdj) -- (dcj)
 (daj) -- (adj)

 (eab) -- (aeb)
 (eba) -- (bea)
 (ebc) -- (bec)
 (ecb) -- (ceb)
 (ecd) -- (ced)
 (edc) -- (dec)
 (eda) -- (dea)
 (ead) -- (aed);

\draw[thick]
 (abe) -- (aeb)
 (bae) -- (bea)
 (bce) -- (bec)
 (cbe) -- (ceb)
 (cde) -- (ced)
 (dce) -- (dec)
 (dae) -- (dea)
 (ade) -- (aed)

 (abj) -- (adj)
 (bcj) -- (baj)
 (cdj) -- (cbj)
 (daj) -- (dcj)

 (eab) -- (eba)
 (ebc) -- (ecb)
 (ecd) -- (edc)
 (eda) -- (ead);

\draw[line width=0.2cm, gray!50, line cap=round]
 (abe) -- (abj)
 (bae) -- (baj)
 (bce) -- (bcj)
 (cbe) -- (cbj)
 (cde) -- (cdj)
 (dce) -- (dcj)
 (dae) -- (daj)
 (ade) -- (adj)

 (aeb) -- (aed)
 (bec) -- (bea)
 (ced) -- (ceb)
 (dea) -- (dec)

 (eab) -- (ead)
 (ebc) -- (eba)
 (ecd) -- (ecb)
 (eda) -- (edc);

\fill
 (dec) circle (0.1)
 (edc) circle (0.1)
 (dce) circle (0.1)
 (dea) circle (0.1);

\node[above, xshift= 1pt] at (dec) {a};
\node[left, yshift= 4pt] at (edc) {b};
\node[right, yshift= 2pt] at (dce) {c};
\node[left, yshift= -2pt] at (dea) {d};

\end{tikzpicture}

\caption{
Top, 4 flags of a polytope; bottom, the polytope's flag graph with edge between flags that differ by a vertex, edge, and facet shown dotted, thin, and thick gray respectively with vertices corresponding to the 4 flags indicated.}\label{fig:flags}
\end{center}
\end{figure}
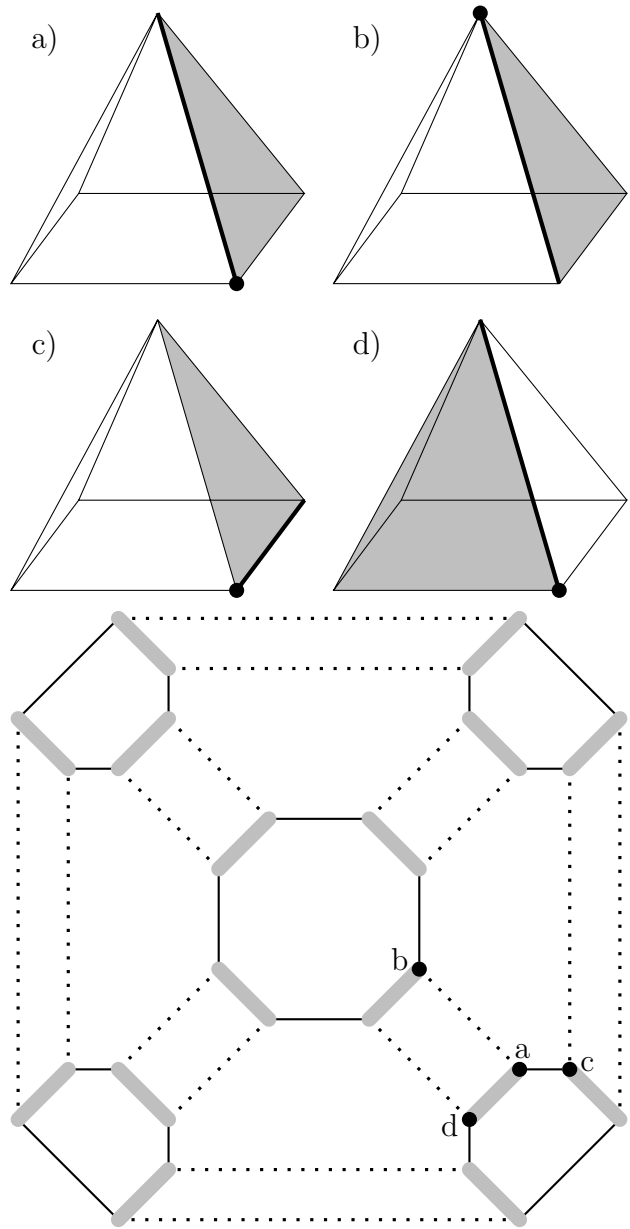

Now we will define the properties that will be used in Theorem \ref{tem:rmcp} for the maxbiclique lattice of a relation. 
A {\bf flag} of a lattice, or more generally of a poset, is a maximal totally ordered subset, and implicit in each of these properties is that the poset be graded, which means 
all flags have the same length.  The {\bf rank} of a graded poset is the length of its flags.  Note that we begin counting at 0, so the rank of the singleton poset is 0.  The rank of an element of a poset is the number of elements below it in a flag.  The {\bf flag graph} of a poset is a graph consisting of a node for each flag $\mathcal{F}$, and edges between pairs of flags that differ in only one element.  That is, $\mc{F}$ and $\mc{F}'$ are neighbors when $|\mc{F}\setminus \mc{F}'| = 1$. 
Figure \ref{fig:flags} shows the flag graph of a polytope, which is the flag graph of its face lattice. 
When the flag graph of a graded poset is connected we say the poset is {\bf flag connected}.  
An interval $[a,b]$ of a poset is the set of all elements between two particular elements $a,b$ of the poset, $[a,b] = \{c:\: a \leq c \leq b\}$.  
Finally, a {\bf diamond} poset is a graded poset where every rank 2 interval has 4 elements

\section{Realizability and the Facet-Vertex(Ray) Matrix}\label{sec:fvm}

Here is the main theorem for polytopes.

\begin{tem}\label{tem:rmcp}
Given a relation $\mc{R}$, there is a polytope with facet-vertex incidence $\mc{R}$ if and only if both the maxbiclique lattice of $\mc{R}$ is 
flag connected, rank $d+1$, and diamond, and $\mc{R}$ has 
a rank $d$ filled 1-incidence matrix. 

Moreover, 
for such $\mc{R}$, $M$ is the facet-vertex matrix of a centered type $\mc{R}$ polytope if and only if $M$ is a rank $d$ filled 1-incidence matrix of $\mc{R}$, and these matrices parameterize the linear moduli space of such polytopes. 
\end{tem}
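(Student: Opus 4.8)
The plan is to prove the theorem in two directions, handling the combinatorial skeleton first and the algebraic (matrix) content second, since the "moreover" clause about parameterizing the linear moduli space is essentially bookkeeping once the equivalence is established. For the forward direction, suppose $\mc{R}$ is the facet-vertex incidence of a polytope $P$ of dimension $d$. By Lemma \ref{lem:mbc}, the maxbiclique lattice of $\mc{R}$ with $J$ (the vertices) as the lower part is isomorphic to the face lattice of $P$ with its top and bottom removed — or, adjusting conventions, to the full face lattice — because the facets and vertices are exactly the meet and join irreducibles of the face lattice. The face lattice of a $d$-polytope is graded of rank $d+1$, its proper part is flag connected (this is the standard fact that the flag graph of a polytope is connected, via the dual graph of a vertex figure / Balinski-type argument), and it is a diamond poset because every rank-2 interval of a polytope's face lattice has exactly four elements (the "diamond property" of polytopes). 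After centering $P$ at an interior point, the facet-vertex matrix $M$ with $[M]_{i,j}=\langle h_i,w_j\rangle$ is a filled $1$-incidence matrix — incidence entries equal $1$ by definition of covertex, non-incidence entries are strictly less than $1$ since a vertex not on a facet lies strictly inside the supporting halfspace — and $M$ factors as $H W^{\mathsf T}$ where $H$ stacks the covertices and $W$ stacks the vertices in $\mb{R}^d$, so $\operatorname{rank} M \le d$; full-dimensionality forces equality.

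For the converse, suppose the maxbiclique lattice $\mc{L}$ of $\mc{R}$ is flag connected, rank $d+1$, and diamond, and that $M$ is any rank $d$ filled $1$-incidence matrix of $\mc{R}$. Factor $M = H W^{\mathsf T}$ with $H \in \mb{R}^{|I|\times d}$, $W \in \mb{R}^{|J|\times d}$, rows $h_i$ and $w_j$. Let $P = \operatorname{conv}\{w_j : j\in J\}$. The content to establish is: (i) $P$ is $d$-dimensional and centered; (ii) each $\{x : \langle h_i,x\rangle = 1\}$ is a supporting hyperplane of $P$, i.e. $\langle h_i,w_j\rangle \le 1$ for all $j$, with equality exactly on $\mc{R}$; and (iii) these are all the facets, and the vertices of $P$ are exactly the $w_j$, with incidence precisely $\mc{R}$. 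Point (ii) is immediate from the filled-incidence hypothesis. The crux is (iii): one must rule out the possibility that some $w_j$ is not a vertex, or that $\mc{R}$ records a strict subset of the true incidences, or that $P$ has extra facets. Here the combinatorial hypotheses do the work: one shows that the face poset generated by the hyperplane arrangement $\{\langle h_i,\cdot\rangle\le 1\}$ intersected with $P$ refines to a poset containing $\mc{L}$, and then a counting/dimension argument using gradedness of rank $d+1$, the diamond property, and flag connectivity forces this poset to equal $\mc{L}$ — in particular no face can "split" or "merge" without violating the diamond property or changing the rank. I expect this to be the main obstacle, and I would structure it as: define the map from faces of $P$ to maxbicliques by sending a face $f$ to the pair (covertices whose hyperplane contains $f$, vertices of $P$ contained in $f$); show it is order-preserving and injective; use flag connectivity of $\mc{L}$ together with the diamond property to show that the image is an up-set and down-set, hence everything, invoking Lemma \ref{lem:mbc} to identify $\mc{L}$ with the face lattice of a $d$-polytope realizing $\mc{R}$.

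Finally, for the "moreover" statement: the first paragraph already shows every centered type $\mc{R}$ polytope has a rank $d$ filled $1$-incidence matrix of $\mc{R}$ as its facet-vertex matrix, and the converse direction shows every such matrix arises this way. It remains to check that two centered type $\mc{R}$ polytopes $P, P'$ have the same facet-vertex matrix if and only if they differ by an invertible linear transformation $g \in GL_d(\mb{R})$: if $P' = gP$ then vertices transform as $w_j \mapsto g w_j$ and covertices as $h_i \mapsto (g^{-1})^{\mathsf T} h_i$, so $\langle h_i, w_j\rangle$ is unchanged; conversely, if the matrices agree, then writing $M = HW^{\mathsf T} = H'W'^{\mathsf T}$ with $H,W,H',W'$ all of rank $d$, a standard linear-algebra argument (rank factorizations of a fixed matrix differ by a unique invertible change of basis) produces $g$ with $W' = Wg^{\mathsf T}$ and $H' = H g^{-1}$, giving $P' = \{x : x^{\mathsf T} = t^{\mathsf T} W'\} = gP$. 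Thus the facet-vertex matrices are exactly a set of orbit representatives for the $GL_d(\mb{R})$-action on the space of centered type $\mc{R}$ realizations, which is what it means to parameterize the linear moduli space. The one subtlety to address is that a priori a rank factorization has $W$ only determined up to $GL_d$, so one must be slightly careful to phrase the parameterization as the matrix being a complete invariant rather than a literal section; I would state it as: the facet-vertex matrix map descends to a bijection from the linear moduli space onto the set of rank $d$ filled $1$-incidence matrices of $\mc{R}$.
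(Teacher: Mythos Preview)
Your forward direction and the ``moreover'' paragraph are fine and essentially match the paper. The gap is in the reverse direction, specifically in the direction you choose for the comparison map and in how you plan to close it off.

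You propose to map faces of $P$ to maxbicliques of $\mc{R}$, sending $f$ to the pair $(\{i : f \subset h_i^{=1}\}, \{j : w_j \in f\})$. This map is not obviously well-defined: the pair is a biclique, but there is no reason it should be maximal. If some $w_{j'}\notin f$ happens to lie in every $h_i^{=1}$ with $i\in I_f$, you can enlarge the $J$-part, and nothing you have assumed rules this out --- precisely because you have not yet shown that the $h_i$ exhaust the facets of $P$ or that the $w_j$ exhaust its vertices. Passing to the maxbiclique closure would rescue well-definedness but immediately cost you injectivity. Your surjectivity sketch (``the image is an up-set and down-set'') is also not an argument: an order-embedding need not have an order-ideal or order-filter as image, and you have not said what mechanism would force this.

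The paper runs the map the other way. It defines $\varphi: \mc{L} \to \text{faces}(P)$ by $\varphi(I_a,J_a) = P \cap \bigcap_{i\in I_a} h_i^{=1}$, which is automatically a face since each $h_i^{=1}$ supports $P$. Order preservation and injectivity are then short arguments using the maxbiclique property directly. This direction also buys you the dimension of $P$: an order monomorphism sends a flag of $\mc{L}$ (length $d+1$) to a chain of faces of $P$, forcing $\dim P \ge d$, which together with $\operatorname{rank} W = d$ gives $\dim P = d$. Your direction would only give $\dim P \le d$, which you already know. Finally, surjectivity is handled by the paper's Lemma~\ref{apinj}: an order monomorphism between flag connected diamond lattices of the same finite rank is an isomorphism. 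The proof of that lemma is the step where flag connectivity and the diamond condition actually do work --- the diamond condition makes both flag graphs $d$-regular, and a connected $d$-regular graph has no proper $d$-regular induced subgraph. That is the missing engine in your outline; the ``up-set/down-set'' heuristic does not substitute for it.
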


From this we can see that the dimension of the realization space of a combinatorial polytope is $d(n+m) - |\mc{R}|$, where $n$ and $m$ are the number of facets and vertices respectively, and $|\mc{R}|$ is the number pairs of these that are incident \cite{robertson}, which makes the dimension of the linear moduli space $d(n+m-d) - |\mc{R}|$. 

While the face lattice of a polytope is a larger set of objects than the facet-vertex incidence relation $\mc{R}$, the following lemma shows that the face lattice is the maxbiclique lattice of $\mc{R}$, so these objects carry the same information.  
This is proved in \cite{introlat} as a special case of the Dedekind-MacNeille completion. 

\begin{lem}\label{lem:mbc} Any lattice where all flags are finite is isomorphic to the maxbiclique lattice of the comparability relation between its meet and join irreducibles.  
\end{lem}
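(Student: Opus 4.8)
The plan is to exhibit mutually inverse order isomorphisms between $\mc{L}$ and the maxbiclique lattice of the relation $\mc{R} = \{(i,j)\in I\times J : j\leq i\}$, where $I$ and $J$ denote the sets of meet and of join irreducibles of $\mc{L}$. Concretely, for $c\in\mc{L}$ put $I_c = \{i\in I : c\leq i\}$ and $J_c = \{j\in J : j\leq c\}$, and define $\varphi(c) = (I_c,J_c)$; in the other direction define $\gamma(I_0,J_0) = \bigvee J_0$. Once the proof is complete this last quantity also equals $\bigwedge I_0$, which is why the maxbiclique lattice may be built from either part.

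The engine of the argument is the standard decomposition fact, also proved in \cite{introlat} in the course of constructing the Dedekind-MacNeille completion: in a lattice all of whose flags are finite, every element satisfies $c = \bigvee J_c$, and dually $c = \bigwedge I_c$. Since finite flags force the descending chain condition below $c$, this follows by well-founded induction — if $c$ is join irreducible it is immediate, and otherwise $c$ is the join of finitely many elements strictly below it, each of which is by induction a join of join irreducibles lying below it and hence below $c$, so $\bigvee J_c \geq c$ while clearly $\bigvee J_c\leq c$. Granting this, $\gamma\circ\varphi = \mathrm{id}$ since $\gamma(\varphi(c)) = \bigvee J_c = c$, and $\varphi$ is order reflecting since $J_{c_1}\subseteq J_{c_2}$ gives $c_1 = \bigvee J_{c_1}\leq\bigvee J_{c_2} = c_2$; the converse implication is clear, so $c_1\leq c_2\iff J_{c_1}\subseteq J_{c_2}$, which is exactly the order on the maxbiclique lattice.

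It remains to see that $\varphi$ lands in the set of maxbicliques and hits all of them. The pair $(I_c,J_c)$ is an induced biclique since $i\in I_c$ and $j\in J_c$ give $j\leq c\leq i$; it is maximal because if some $j'\notin J_c$ could be adjoined to $J_c$ with $(I_c,J_c\cup\{j'\})$ still a biclique, then $j'\leq i$ for every $i\in I_c$, forcing $j'\leq\bigwedge I_c = c$ and hence $j'\in J_c$, a contradiction, and adjoining an element to $I_c$ is ruled out dually. Conversely, given any maxbiclique $(I_0,J_0)$, set $c = \bigvee J_0$; then $J_0\subseteq J_c$ since each $j\in J_0$ lies below $c$, and $I_0\subseteq I_c$ since each $i\in I_0$ dominates every element of $J_0$ and hence their join $c$. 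Thus $(I_0,J_0)$ is contained in the maxbiclique $\varphi(c)$, and maximality of $(I_0,J_0)$ forces $(I_0,J_0) = \varphi(c)$. Together these show $\varphi$ and $\gamma$ are inverse order isomorphisms.

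The only step needing real care is the decomposition fact, because the hypothesis that all flags are finite is weaker than finiteness and one must genuinely invoke the resulting chain conditions to know that the inductive unfolding of $c$ into irreducibles terminates; this is precisely the content borrowed from the theory of the Dedekind-MacNeille completion in \cite{introlat}. Everything after that is routine unwinding of the definitions of induced biclique, maximality, and the order on the maxbiclique lattice.
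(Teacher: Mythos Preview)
Your proof is correct and essentially identical to the paper's own (commented-out) argument: the same maps $\varphi(c)=(I_c,J_c)$ and $\gamma(I_0,J_0)=\bigvee J_0$, the same decomposition fact $c=\bigvee J_c=\bigwedge I_c$ (stated there as a separate lemma), and the same verifications that each $\varphi(c)$ is a maxbiclique and that every maxbiclique arises this way. The published text simply defers to \cite{introlat}, so your inductive sketch of the decomposition fact is a welcome addition that fills in what the paper leaves unstated.
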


For polytopes, this means that each face of a polytope can be uniquely determined from the vertices it contains and the facets it is contained in, and a set of facets and vertices are those of a face if and only if the following 2 conditions hold; (biclique) all the vertices of the face are contained in all its facets, and (maximal) this first condition fails if any other vertex or facet is added to those of the face.

The proof of Theorem \ref{tem:rmcp} will use ideas similar to what D\'iaz used in \cite{diaz}, but are presented in a more combinatorial rather than geometric manner.  The point of this is to make explicit the purely combinatorial conditions that are needed, rather than using combinatorial polytopes, which are geometrically defined objects. 
The proof works by constructing a polytope and showing that there is an order preserving injection from the maxbiclique lattice of the given relation to the face lattice of this polytope.  The following lemma shows that such an injection is actually a bijection, so exhibiting such an injection is enough to show that the polytope realizes the given relation. 

\begin{lem}\label{apinj} An order monomorphism between flag connected diamond lattices of the same finite rank is an isomorphism.
\end{lem}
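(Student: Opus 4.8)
The plan is to show that an order monomorphism $\varphi : \mc{L} \to \mc{L}'$ between flag connected diamond lattices of the same finite rank $r$ is surjective, hence an isomorphism. The key idea is to work with the flag graphs. First I would observe that $\varphi$ carries flags to flags: since $\varphi$ is injective and order preserving, it sends a maximal chain of $\mc{L}$ to a chain of $\mc{L}'$ of the same length $r+1$, and because both lattices have rank $r$, a chain of length $r+1$ in $\mc{L}'$ is already maximal, i.e.\ a flag. So $\varphi$ induces a map $\varphi_*$ on flag graphs. The crucial local step is that $\varphi_*$ sends neighboring flags to neighboring flags: if $\mc{F}, \mc{F}'$ differ in exactly one element, say at rank $k$ with $0 < k < r$, then $\mc{F} \cap \mc{F}'$ generates a rank $2$ interval $[a,b]$ in $\mc{L}$ (where $a$ has rank $k-1$ and $b$ has rank $k+1$), which by the diamond property has exactly $4$ elements; $\varphi$ maps $[a,b]$ injectively into the interval $[\varphi(a),\varphi(b)]$ of $\mc{L}'$, which by the diamond property of $\mc{L}'$ also has exactly $4$ elements, so $\varphi$ restricts to a bijection on this interval, and consequently $\varphi(\mc{F})$ and $\varphi(\mc{F}')$ differ in exactly one element. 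Thus $\varphi_*$ is a graph homomorphism from the flag graph of $\mc{L}$ to the flag graph of $\mc{L}'$.

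Next I would upgrade this to show the image of $\varphi_*$ is a connected component of the target flag graph, using the diamond property again but in the other direction. Fix a flag $\mc{F}$ of $\mc{L}$ and let $\mc{G} = \varphi(\mc{F})$. Let $\mc{G}'$ be any neighbor of $\mc{G}$ in the flag graph of $\mc{L}'$, differing from $\mc{G}$ at some rank $k$ with $0 < k < r$. The elements of $\mc{G}$ at ranks $k-1$ and $k+1$ are $\varphi(a)$ and $\varphi(b)$ for the corresponding $a < b$ in $\mc{F}$; the interval $[\varphi(a),\varphi(b)]$ has $4$ elements, and $\varphi$ maps the $4$ elements of $[a,b]$ into it injectively, hence onto it. So the element of $\mc{G}'$ at rank $k$ is $\varphi(c)$ for the unique $c \in [a,b]$ distinct from the rank-$k$ element of $\mc{F}$, and replacing that element of $\mc{F}$ by $c$ gives a flag $\mc{F}'$ of $\mc{L}$ with $\varphi(\mc{F}') = \mc{G}'$. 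Therefore $\varphi_*$ maps the neighborhood of $\mc{F}$ onto the neighborhood of $\mc{G}$ surjectively. Since $\mc{L}$ is flag connected, iterating this shows every flag in the component of $\mc{G}$ lies in the image of $\varphi_*$; since $\mc{L}'$ is flag connected, that component is all of the flag graph of $\mc{L}'$, so every flag of $\mc{L}'$ is the image of a flag of $\mc{L}$.

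Finally, since every element of $\mc{L}'$ lies in some flag and every flag of $\mc{L}'$ is $\varphi(\mc{F})$ for some flag $\mc{F}$ of $\mc{L}$, every element of $\mc{L}'$ is $\varphi$ of some element of $\mc{L}$, so $\varphi$ is surjective. An order-preserving bijection whose inverse is also order preserving (which follows here because $\varphi$ reflects the covering relations by the interval argument above, hence reflects order) is a lattice isomorphism, completing the proof. I expect the main obstacle to be the bookkeeping in the two interval arguments — making sure the diamond property is applied to intervals that genuinely have a top and bottom of the right ranks, and handling the edge behavior cleanly — together with the mild subtlety that one must check $\varphi_*$ is well defined on edges at all interior ranks uniformly.
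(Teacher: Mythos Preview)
Your proof is correct and follows essentially the same route as the paper's: both arguments pass to the flag graphs, use the diamond condition to control neighborhoods, and then invoke flag connectivity to force surjectivity. The paper packages your steps 2 and 3 more tersely by observing that the diamond condition makes both flag graphs $d$-regular, so the image of $\varphi_*$ is a $d$-regular induced subgraph of a connected $d$-regular graph and hence cannot be proper; your version unpacks this same counting argument explicitly by showing every neighbor of an image flag is again an image flag.
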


\begin{proof}
Suppose the lemma fails, then there are flag connected diamond lattices $\mathcal{P}$ and $\mathcal{Q}$ of rank $d\!+\!1$ with a monomorphism $\varphi: \mc{P} \to \mc{Q}$ such that there is some element $f\in\mathcal{Q}$ that is not in $\varphi(\mc{P})$.  Without loss of generality we assume that $\mc{P} \subset \mc{Q}$ and the monomorphism is the identity, otherwise just replace $\mathcal{P}$ with its image.  Consider now the flag graphs $G,H$ of $\mc{P},\mc{Q}$ respectively.  Every flag of $\mc{P}$ is a flag of $\mathcal{Q}$, and two flags of $\mc{P}$ differ by one element as flags of $\mc{P}$ if and only if they do so as flags of $\mathcal{Q}$, so $G$ is an induced subgraph of $H$.  The lattice $\mc{P}$ is non-empty, and $f$ must belong to some flag of $\mathcal{Q}$, so $\emptyset \neq G \varsubsetneq H$.  
The diamond condition is equivalent to each flag having 1 neighbor in the flag graph for each element except $\top$ and $\bot$, and no other neighbors.
This means $G$ is a $d$-regular proper induced subgraph of a connected $d$-regular graph, $H$, which is impossible. 
To see why, consider a path of $H$ from a node that is in $G$ to a node that is not in $G$.  The last node of this path that is in $G$ must have $d$ neighbors total, of which at least 1 neighbor must not be in $G$ and $d$ neighbors must be in $G$. 
\end{proof}

To show that the conditions for realizability given in Theorem \ref{tem:rmcp} are sufficient, 
we construct a $d$-polytope $P$  
from a decomposition of a rank $d$ filled 1-incidence matrix $M=H^* W$ of a relation $\mc{R}$ satisfying the specified combinatorial conditions into a $n \times k$ matrix $H^*$ and a $k \times m$ rank $d$ matrix $W$. 
Specifically, $P$'s vertices are the columns of $W$, and if $k=d$ its covertices are the columns of $H$. 
We then see that the facet-vertex incidence of the constructed polytope $P$ is $\mc{R}$. 
To make this construction more explicit, we use the compact singular value decomposition to get $H$, $W$ and their pseudoinverses $H^+$, $W^+$. 

If we start with a polytope, find its facet-vertex matrix $M$, and do the above construction, we get back a linear copy of the polytope we started with. 
This is because, 
if the columns of a matrix $X$ are the vertices of a polytope $P$ and $W$ is any matrix with the same null space as $X$, then  $X=AW$ factors though $W$ by a matrix $A=XW^+$ that does not change the rank, so 
the columns of $W$ are the vertices of some linear copy of $P$. 
We can do the above construction by finding a bases for ${\rm null}(M)^\bot = {\rm null}(X)^\bot$, which are the rows of $W$, to get a linear copy of $P$ in $\mb{R}^d$.  Alternatively, we get the following from the trivial decomposition $M = I^*M$. 
\begin{cor}
The columns of the facet-vertex matrix $M$ of a polytope $P$ are the vertices of a linear copy of $P$.
\end{cor}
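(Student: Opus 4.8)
The plan is to apply the general factorization principle recorded just above to the trivial decomposition $M = I^{*}M$, taking the left factor to be the identity and $W = M$. Write $X$ for the $d\times m$ matrix whose $j^{\rm th}$ column is the vertex $w_{j}$ of $P$, and $H$ for the $d\times n$ matrix whose $i^{\rm th}$ column is the covertex $h_{i}$, so that by definition $M = H^{*}X$. Since $M$ factors through $X$ we immediately get ${\rm null}(X)\subseteq{\rm null}(M)$.

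The one substantive step is the reverse inclusion. I would argue that if $Mv = 0$ then $H^{*}(Xv) = 0$, so the vector $Xv$ is orthogonal to every covertex of $P$; but $P$ is full-dimensional and centered, hence its polar $P^{*}$ is a bounded, full-dimensional polytope whose vertices are exactly the covertices $h_{i}$, and these span $\mb{R}^{d}$, forcing $Xv = 0$. Thus $M$ has the same null space as $X$, and in particular ${\rm rank}(M) = {\rm rank}(X) = d$.

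With that in hand the factorization principle applies verbatim with $W = M$: because $M$ and $X$ have equal null spaces, $X = (XM^{+})M$, and $A = XM^{+}$ restricts to a linear isomorphism from the column space of $M$ onto the column space $\mb{R}^{d}$ of $X$. Consequently $A$ maps the columns of $M$ bijectively onto the vertices $w_{j}$ of $P$ and preserves extreme points, so the columns of $M$ are precisely the vertices of a linear copy of $P$. The main, and really the only, obstacle is the reverse null-space inclusion, which is exactly the observation that the covertices of a full-dimensional centered polytope span the ambient space; everything else is bookkeeping layered on the factorization principle already established.
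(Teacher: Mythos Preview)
Your proof is correct and follows essentially the same route as the paper: invoke the factorization principle with the trivial decomposition $M = I^{*}M$, taking $W = M$. The paper simply asserts ${\rm null}(M)^{\bot} = {\rm null}(X)^{\bot}$ without justification, so your argument via the fact that the covertices of a full-dimensional centered polytope span $\mb{R}^{d}$ fills in the one detail the paper leaves implicit rather than constituting a different approach.
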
 

The is shown in the proof of Theorem \ref{tem:rmcp}. 
We would, however, like to start with just a matrix $M$ instead of a polytope.

\begin{proof}[Proof of Theorem \ref{tem:rmcp}]
We have the `only if' direction of the first part of the theorem immediately. 
To show that any relation $\mc{R}$ satisfying the combinatorial conditions with a rank $d$ filled 1-incidence matrix $M$ can be realized as the facet-vertex incidence of a polytope, 
let $\mc{P}$ be the maxbiclique lattice of $\mc{R}$, which by hypothesis is rank $d+1$, flag connected, and satisfies the diamond condition, let $H$ and $W$ be matrices such that $W$ has rank $d$ and $M=H^*W$, 
let $\{h_i\}_{i\in[n]}$ and and $\{w_j\}_{j\in[m]}$ be the columns of $H$ and $W$ respectively, 
and let $P=\text{conv}(\{w_j\}_{j\in[m]})$.  

We will show that $P$ is a realization of $\mc{R}$.  
Since $W$ has rank $d$, $P$ has dimension at most $d$.
Furthermore, $w_j$ is in the hyperplane $h_i^{=1}:=\{x: \left<h_i,x \right>=1\}$ for $(i,j)\in\mc{R}$, but is in the open half space $h_i^{<1}:={\{x: \left<h_i,x\right><1\}}$ for $(i,j)\nin\mc{R}$. 
To be more explicit we could let $H=I$ and $W=M$ for example, in which case $h_i^{=1}$ becomes $[x]_i = 1$ and $h_j^{<1}$ becomes $[x]_i < 1$.

We will construct a map from $\mc{P}$ to the face lattice of $P$ and show that it is an isomorphism. 
For $a = (I_a,J_a) \in \mc{P}$, let $\varphi(a)=\bigcap_{i\in I_a}h_i^{=1} \cap P$ be the face of $P$ we get by intersecting it with the hyperplanes corresponding to covertices of $a$.  This must be a face of $P$ since $h_i^{=1}$ are all supporting hyperplanes of $P$. 

First we will see that $\varphi$ preserves order.  That is, we will show the very strong condition that $\varphi(a) \subseteq \varphi(b)$ if and only if $a\leq b$.  Suppose $a\leq b$, then  $I_a\subseteq I_b$ and $\bigcap_{i\in I_a}h_i^{=1}\subseteq \bigcap_{i\in I_b}h_i^{=1}$ so $\varphi(a) \subseteq \varphi(b)$.  For the other direction suppose $a \nleq b$, then there is some $j\in J_a$ but $j\nin J_b$, so $w_i\in \varphi(a)$ but $w_j\nin \varphi(b)$ and $\varphi(a)\nsubseteq \varphi(b)$.  Thus order is maintained.

We will also see that $\varphi$ is an injection, and therefor a monomorphism.  To see this consider a pair of faces $a, b$ of $\mc{P}$ that map to the same face $\varphi(a)=\varphi(b)=f$ of $P$.  With this, $w_j\in f \subset h_i^{=1}$ for any $i\in I_a\cup I_b$ and $j\in J_a\cup J_b$, so $[M]_{ij}=1$ and 
$(i,j) \in \mc{R}$. 
This means $a=(I_a,J_a)$ and $b=(I_b,J_b)$ are maxbicliques that are subsets of the same biclique $c=(I_a\cup I_b, J_a\cup J_b)$, so they must be equal $a=b=c$, which makes $\varphi$ is a monomorphism.  
 
This induces an injection from a flag $\bot,a_1,\dots,a_d,\top \in \mc{P}$ to a totally ordered set of $P$'s faces $\varphi(\bot)=\emptyset,\varphi(a_1),\dots,\varphi(a_d),\varphi(\top)=P$, which must be of the same size or less.  Since a larger set cannot be injected into a smaller one, these flags must be the same size, so $P$ must have dimension $d$. 
Now $\varphi$ is a monomorphism between flag connected diamond lattices of the same rank, and by Lemma \ref{apinj} is therefore an isomorphism.  Thus, $P$ is a realization of $\mc{R}$ and the `if' direction of the first part of the theorem holds.


For the second part of the theorem, we consider realizations in $\mb{R}^d$.  For this, let $M = U\Sigma V^*$ be the compact singular value decomposition of $M$ and let $H = \Sigma^{\nicefrac{1}{2}} U^*$ and $W = \Sigma^{\nicefrac{1}{2}} V^*$. 
Recall that any positive symmetric definite matrix is the square of another unique positive symmetric definite matrix and $\Sigma$ is a positive diagonal matrix, so this is well defined. 
From what we have just seen, $M$ is the facet-vertex matrix of the polytope $P$ with vertices given by the columns of $W$. 
Suppose we have some other decomposition $M=H_0^*W_0$ of the same size.  The rows of both $W$ and $W_0$ give a basis of the same space, ${\rm null}(M)^\bot$, so there is an invertible linear transformation $A$ between them $W_0=AW$.  For $W$ as given this is $A=W_0V\Sigma^{-\nicefrac{1}{2}}$.  Note that $W^+ = V\Sigma^{-\nicefrac{1}{2}}$ is the pseudo-inverse of $W$.  We can see this formula more clearly by considering a completion of $V$ to a full orthogonal matrix $Q$. 
\[ AW = W_0V\Sigma^{-\nicefrac{1}{2}}\Sigma^{\nicefrac{1}{2}}V^* 
 = W_0\: [ V \: \mb{0} ] { V^* \brack \mb{0} }
 = W_0\: [ V \: \mb{0} ] Q^*
= W_0QQ^* = W_0 \]
Note we are able to replace $\mathbb{0}$ columns with the extra columns of $Q$ 
since every row of $W_0$ is orthogonal to the extra columns of $Q$, so these columns vanish anyway. 
Similarly we have $A^{-*} = H_0U\Sigma^{-\nicefrac{1}{2}}$ and $H_0=A^{-*}H$, and for any $A$ nonsingular we get such a decomposition $M=(A^{-*}H)^*AW$.  This means that if $P_0 \subset \mb{R}^d$ is any centered type $\mc{R}$ polytope with facet-vertex matrix $M$, then $P_0 = A P$ is a linear copy of $P$, since the matrices formed by vertices $W_0$ and covertices $H_0$ of $P_0$ give another decomposition of $M$.  Also, applying an invertible linear transformation to $P$ will not change the facet-vertex matrix of the resulting polytope, so $M$ parameterizes centered realizations of $\mc{R}$ modulo linear transformations. 
\end{proof}

Here is the main theorem for cones.

\begin{tem}\label{tem:rmcc}
If the maxbiclique lattice of $\mc{R}$ is 
flag connected, rank $d+1$, and diamond, then $\mc{R}$ has 
a rank $d$ filled 1-incidence matrix if and only if  
$\mc{R}$ has a rank $d+1$ filled 0-incidence matrix $N$.

Moreover, 
for such $\mc{R}$, $N$ is the facet-ray matrix of a type $\mc{R}$ cone if and only if $N$ is a rank $d+1$ filled 0-incidence matrix $N$, and such matrices $N$ modulo positive scalings of its rows and columns  parameterize the projective moduli space of type $\mc{R}$ polytopes. 
\end{tem}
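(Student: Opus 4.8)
The plan is to deduce Theorem \ref{tem:rmcc} from Theorem \ref{tem:rmcp} via the standard dictionary between a $d$-polytope $P\subset\mb{R}^d$ and the $(d+1)$-dimensional pointed cone $C_P$ of homogeneous coordinates over it: a vertex $w_j$ of $P$ lifts to a ray generator $\tilde w_j=(w_j,1)$, a covertex $h_i$ lifts to a cogenerator $\tilde h_i=(h_i,-1)$, and $\langle\tilde h_i,\tilde w_j\rangle=\langle h_i,w_j\rangle-1$; so the facet-ray matrix of $C_P$ is $M-J$, where $M$ is the facet-vertex matrix of $P$ and $J$ is the all-ones matrix, and $(i,j)\in\mc{R}$ precisely when the corresponding entry is $0$, all others negative. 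The face lattice of $C_P$ is isomorphic to that of $P$, so the combinatorial hypotheses on the maxbiclique lattice of $\mc{R}$ are unchanged under homogenization; only the rank changes, from $d$ to $d+1$. Thus the two things to do are to keep the rank bookkeeping straight and to re-run the construction from the proof of Theorem \ref{tem:rmcp} one dimension up, for cones.

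For the first part, the forward direction: given a rank $d$ filled $1$-incidence matrix $M$ of $\mc{R}$, I would invoke Theorem \ref{tem:rmcp} to realize it as the facet-vertex matrix of a centered type $\mc{R}$ $d$-polytope $P$ with $M=H^*W$, $W$ of rank $d$, the columns $w_j$ of $W$ the vertices and the columns $h_i$ of $H$ the covertices. Since $P$ is full-dimensional and centered, so is $P^*$; hence the lifts $\tilde w_j$ and $\tilde h_i$ each span $\mb{R}^{d+1}$, so $N:=[\langle\tilde h_i,\tilde w_j\rangle]=M-J$ is a filled $0$-incidence matrix of $\mc{R}$ which, being a product $\tilde H^*\tilde W$ of a matrix of full column rank $d+1$ with one of full row rank $d+1$, has rank $d+1$. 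For the converse: given a rank $d+1$ filled $0$-incidence matrix $N$ of $\mc{R}$, I would take a minimal factorization $N=\tilde H^*\tilde W$ (so both factors have rank $d+1$), let $\tilde w_j,\tilde h_i$ be their columns, and set $C=\mathrm{cone}(\{\tilde w_j\})$. This $C$ is full-dimensional and pointed: if $v,-v\in C$ then $\langle\tilde h_i,v\rangle=0$ for all $i$ (the entries of $N$ are $\le 0$) and the $\tilde h_i$ span, so $v=0$. Then I would check, exactly as in the proof of Theorem \ref{tem:rmcp} with $0$ in place of $1$, that $a=(I_a,J_a)\mapsto\bigcap_{i\in I_a}\{x:\langle\tilde h_i,x\rangle=0\}\cap C$ is a strict-order-preserving injection of the maxbiclique lattice of $\mc{R}$ into the face lattice of $C$ (each hyperplane supports $C$; injectivity uses maximality of maxbicliques); since $C$ is full-dimensional its face lattice, like any pointed cone's, is flag connected and diamond of rank $\dim C=d+1$, so Lemma \ref{apinj} upgrades this to an isomorphism. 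Hence $C$ realizes $\mc{R}$ as a cone — via Lemma \ref{lem:mbc} its facets and rays are indexed by $I$ and $J$ with incidence $\mc{R}$ — and slicing $C$ by a hyperplane transversal to all its rays (one exists since $C$ is pointed and full-dimensional) gives a $d$-polytope realizing $\mc{R}$, so $\mc{R}$ has a rank $d$ filled $1$-incidence matrix by the `only if' direction of Theorem \ref{tem:rmcp}.

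For the second statement, the forward implication is immediate: a type $\mc{R}$ cone is pointed and, since the maxbiclique lattice of $\mc{R}$ has rank $d+1$, full-dimensional, so its generators and cogenerators each span $\mb{R}^{d+1}$ and its facet-ray matrix $N=H^*W$ has rank $d+1$, with $\langle h_i,w_j\rangle$ zero exactly when ray $j$ lies in facet $i$ and negative otherwise; the reverse implication is the cone constructed above. For the moduli claim, I would note that for $A\in GL_{d+1}$ the cone $AC$ has generators the columns of $AW$ and cogenerators the columns of $A^{-*}H$, with $\langle A^{-*}h_i,Aw_j\rangle=\langle h_i,w_j\rangle$, so $AC$ has the same facet-ray matrix as $C$; conversely, two type $\mc{R}$ cones whose facet-ray matrices agree after positive row and column scalings can, after rescaling cogenerators and generators, be written with minimal factorizations $N=H^*W=H_0^*W_0$, and then — as in the proof of Theorem \ref{tem:rmcp}, since the rows of $W$ and $W_0$ both span $\mathrm{null}(N)^\perp$ — one gets $W_0=AW$ and $H_0=A^{-*}H$ for some invertible $A$, hence $C_0=AC$. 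So the facet-ray matrix modulo positive row and column scalings is a complete invariant of a type $\mc{R}$ cone up to linear equivalence, and combined with the equivalence, recalled in the introduction, between realizations of a combinatorial polytope modulo projective transformations and realizations of the associated cone modulo linear transformations, this gives the claim that these matrices parameterize the projective moduli space.

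The step I expect to be the main obstacle is the same one as in Theorem \ref{tem:rmcp}: the converse direction of the first part. Constructing $C$ is easy; the care is in showing that the columns $\tilde h_i,\tilde w_j$ of the chosen factorization are, up to positive scaling, genuinely the cogenerators and generators of $C$ — that $\{x\in C:\langle\tilde h_i,x\rangle=0\}$ is exactly the facet indexed by $i$ under the isomorphism above, and that no $\tilde h_i$ or $\tilde w_j$ is redundant — so that the matrix one started with is literally the facet-ray matrix of $C$ and of a polytope sliced from it; this is where Lemma \ref{lem:mbc} does the work, and where pointedness (hence the rank $d+1$ hypothesis) is what lets us slice back down to a polytope. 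The rank bookkeeping across homogenization and the $A\leftrightarrow A^{-*}$ duality behind the moduli claim are routine given the corresponding pieces of the proof of Theorem \ref{tem:rmcp}.
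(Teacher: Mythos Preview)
Your argument is correct. The forward direction of the first part and the moduli claim match the paper's proof essentially verbatim, including the $A\leftrightarrow A^{-*}$ computation and the identification of positive row/column scalings with rescalings of (co)generators.

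Where you diverge is the converse of the first part. The paper explicitly notes your route---``we could show the other direction by going through almost the same argument as Theorem \ref{tem:rmcp}, except for cones rather than polytopes''---but instead chooses a purely linear-algebraic construction: from the compact SVD $N=U\Sigma V^*$ it finds positive vectors $x,y$ with $Ux,Vy$ entrywise nonzero (this uses the diamond condition to rule out zero rows of $V$) and $\langle x,\Sigma^{-1}y\rangle=1$, sets $D_1=\mathrm{diag}(-Ux)^{-1}$, $D_2=\mathrm{diag}(Vy)^{-1}$, and shows by an explicit change of basis that $M=D_1ND_2+\mb{1}$ has rank exactly $d$. This buys an explicit formula for the filled $1$-incidence matrix and a geometric interpretation of $D_1,D_2$ as choices of affine hyperplanes slicing $C$ and $C^*$. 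Your route, by contrast, rebuilds the cone $C$ from a factorization of $N$, invokes Lemma \ref{apinj} to identify its face lattice with the maxbiclique lattice, and then slices; this is more uniform with the proof of Theorem \ref{tem:rmcp} and has the side benefit that it simultaneously establishes the reverse implication of the second part (your $C$ literally has facet-ray matrix $N$), whereas the paper's algebraic step has to circle back to that. Both are fine; the paper's choice is about exhibiting the explicit rank-drop mechanism rather than re-running geometry one dimension up.
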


\begin{proof}
For the `only if' direction, Theorem \ref{tem:rmcp} gives us a polytope realizing $\mc{R}$ and a facet-ray matrix of the cone over this polytope in homogeneous coordinates gives us a rank $d+1$ 0-incidence matrix for $\mc{R}$. 
For the rest of the proof, we exclude the case where $\mc{R}$ is a singleton or empty. 
We could show the other direction by going through almost the same argument as Theorem \ref{tem:rmcp}, except for cones rather than polytopes, 
but we prefer to show that we can use a rank $d+1$ filled 0-incidence matrix $N$ for an appropriate relation $\mc{R}$ to find a rank $d$ filled 1-incidence matrix $M$ for $\mc{R}$.  We first note that $N+\mb{1}$ is a filled 1-incidence matrix that and can differ from $N$ in rank by at most 1.  If the rank of $N + \mb{1}$ were 1 less than that of $N$ we would be done, but that might not be the case.  Instead we will find full diagonal matrices $D_1, D_2$ such that $M = D_1ND_2+\mb{1}$ has rank $d$.  We start by letting $N=U\Sigma V^*$ be the compact singular value decomposition and $U_0$ and $V_0$ be respective completions of $U$ and $V$ to orthogonal matrices.

There is some positive vector $y\in \mb{R}^{d+1}_{>0}$ in the positive orthant such that the entries of $Vy$ are all non-zero. 
To see why, suppose no such vector exists.  That is every vector in $V\mb{R}^{d+1}_{>0}$ has some zero entry.  This means that every vector of the positive orthant is orthogonal to some row of $V$, but the orthogonal compliment of any non-zero vector has measure 0, so
the assumption can only hold if 
$V$ has a row of all zeros.  
In that case $N$ would have a column of all zeros, which means some meet irreducible of $\mc{R}$ is incident to all join irreducibles, so this must be the only meet irreducible, but $\mc{R}$ is not a singleton so the diamond condition implies there is more than 1 meet irreducible.
Thus we can find a vector $y$ with all positive entries such that $Vy$ has all non-zero entries, and similarly we can find such a vector $x$ for $U$.  Furthermore, we can normalize these so that $\left<x,\Sigma^{-1}y\right>=1$.

With this in mind we let  $D_1 = {\rm diag}(-Ux)^{-1}$ and $D_2 = {\rm diag}(Vy)^{-1}$, and we preform a change of bases to make the rank of the resulting matrix more apparent.
\begin{align*}
 (U_0^*D_1^{-1})M(D_2^{-1}V_0)
& = U_0^*NV_0 + U_0^*D_1^{-1}\mb{1}D_2^{-1}V_0 \\
& = U_0^*U\Sigma V^*V_0 + U_0^*D_1^{-1}\mathbb{1}\mathbb{1}^*D_2^{-1}V_0 \\
& = U_0^*U(\Sigma - xy^*)V^*V_0 \\
& = \left[\begin{array}{cc}
     (\Sigma - x y^*) & \mathbb{0} \\
     \mathbb{0} & \mathbb{0}
    \end{array}\right]    
\end{align*}
Under this change of bases we see $e_i$ is sent to $\mathbb{0}$ for $i>d+1$, so the rank is at most $d+1$.  We observe that it is in fact $d$, by showing the range is orthogonal to ${\Sigma^{-1}y \brack \mathbb{0}}$.  For this we consider only the first $d+1$ coordinates.
\begin{align*}
 \left< (\Sigma - x y^* )e_i, \Sigma^{-1} y \right> 
 & = \left< \Sigma e_i - (y)_ix , \Sigma^{-1} y \right> \\
 & = [y]_i - [y]_i = 0
\end{align*}
Thus $M$ has rank $d$ and the `if' direction holds.

For the second part of the theorem, recall that two polytopes $P$ and $Q$ are projectively equivalent if and only if the cones over these polytopes in homogeneous coordinates are linearly equivalent.  That is for some $A$ invertible, $A(\mb{R}_{\geq 0}{P \brack 1}) = \mb{R}_{\geq 0}{Q \brack 1}$.  
As we have seen, full rank matrices $H$ and $W$ of a decomposition $N=H^*W$ are determined up to linear transformation by $N$, so $N$ determines a set of generators and cogenerators of $C$ up to linear transformation. 
Also, a set of generators and cogenerators determine the same cone if and only if they are equivalent up to  positive scaling of each, which amounts to scaling the rows and columns of $N$. 
In this way $N$ modulo positive scaling of rows and columns uniquely determines a polytope up to projective transformation. 
Note that the diagonal matrices $D_i$ above scale the generators and cogenerators of a cone to be the vertices of a polytope and the vertices of a dual polytope, which come from the cone and its dual by intersecting each with a hyperplane, and the choice of $x$ and $y$ amount to choosing these hyperplanes.  
\end{proof}

The flag graph of a polytope may be much larger than its face lattice, but is highly structured.  The following theorem shows that we do not have to consider the full flag graph when deciding realizability.

\begin{tem}\label{tem:connect}
The flag connected condition in Theorems \ref{tem:rmcp} and \ref{tem:rmcc} can be replaced with the following condition.  
For every element $a$ of the maxbiclique lattice with rank $k$, the graph having rank $k-1$ elements $a_i \prec a$ as vertices and rank $k-2$ elements $a_i \wedge a_j $ as edges is connected. 
\end{tem}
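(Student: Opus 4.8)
The plan is to show that the new local connectivity condition, together with the other hypotheses (diamond, rank $d+1$), is equivalent to flag connectedness for the maxbiclique lattice $\mc{P}$. One direction is easy: if $\mc{P}$ is flag connected, then for any element $a$ of rank $k$ the interval $[\bot,a]$ is itself a flag connected diamond lattice of rank $k$, and the stated graph on its coatoms $a_i \prec a$ with edges $a_i\wedge a_j$ is exactly the "comaximal graph" of that interval; a standard argument (walk through the flag graph of $[\bot,a]$ and record which coatom each flag passes through) shows this graph is connected. So the real content is the converse, and I would prove it by induction on rank.

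The inductive claim is: \emph{a diamond lattice of rank $r$ in which every element satisfies the local condition is flag connected.} Rank $\le 2$ is trivial (rank $0,1$ are chains; rank $2$ is a diamond, which is flag connected since there are only two flags and they differ in one element). For the inductive step, take $\mc{P}$ of rank $r$. Every proper interval $[\bot,a]$ and $[a,\top]$ inherits the diamond property and the local condition on all of its elements, and has rank $<r$, so by induction each such interval is flag connected. Now I want to move between any two flags $\mc{F},\mc{F}'$ of $\mc{P}$. Using flag connectedness of $[\bot,c]$ where $c$ is the coatom in $\mc{F}$, I can assume $\mc{F}$ and $\mc{F}'$ agree below their respective coatoms after possibly also using flag connectedness of $[a_1,\top]$ for the atom; more precisely, by induction on $r$ applied to the two intervals determined by an atom $a_1\in\mc{F}$, it suffices to connect flags having the same atom, and dually the same coatom. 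The crucial remaining case is connecting two flags $\mc{F},\mc{F}'$ that share neither atom nor coatom except forced ones — and this is precisely where the global hypothesis gets used only at the \emph{top}: the coatoms of $\mc{P}$ with edges $a_i\wedge a_j$ form a connected graph, so I can find a sequence of coatoms $c=c_0,c_1,\dots,c_\ell=c'$ with $c_{t}\wedge c_{t+1}$ of rank $r-2$; picking any flag through each edge element $c_t\wedge c_{t+1}$ and extending it both downward (arbitrarily) and upward through $c_t$ and through $c_{t+1}$, and then using flag connectedness of the intervals $[\bot,c_t]$ to reconcile the chosen extensions below, chains these together into a flag-graph path from $\mc{F}$ to $\mc{F}'$.

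The main obstacle I anticipate is bookkeeping the "reconciliation below a fixed coatom" step cleanly: when I walk through coatoms $c_0,c_1,\dots$ I must repeatedly replace the part of a flag below a coatom $c_t$ by another chain in $[\bot,c_t]$, and I need to know such a replacement can be realized by flag-graph moves of $\mc{P}$ itself, not merely of the subinterval. This follows because a flag-graph edge of $[\bot,c_t]$ (two chains differing in one element below $c_t$) extends to a flag-graph edge of $\mc{P}$ by appending the same tail $c_t\prec\top$; so flag connectedness of $[\bot,c_t]$ transfers to connectivity, within the flag graph of $\mc{P}$, among all flags whose coatom is $c_t$. Assembling these observations — (i) local condition at $\top$ gives a coatom path, (ii) each coatom's lower interval is flag connected by induction, (iii) edges lift — gives the result; the dual statement for the bottom is automatic since the diamond condition is self-dual and one may instead run the argument on the order dual, but it is not needed once one notes that iterating the coatom step already reaches every flag. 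I would also remark that the same proof shows the condition may be imposed only at elements of rank $\ge 2$, and for rank $\le 1$ elements it is vacuous, matching the statement in Theorems \ref{tem:rmcp} and \ref{tem:rmcc}.
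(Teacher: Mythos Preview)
Your argument for the direction ``local condition $\Rightarrow$ flag connected'' is essentially the paper's, repackaged as an induction on rank where the paper writes out the same idea as an explicit top--down construction of a sequence $H_c$ of faces (coatom path at level $d{-}1$, then insert coatom paths inside each element, etc.). One slip to clean up: you assert that upper intervals $[a,\top]$ inherit the local condition, but they do not --- for $b>a$ the coatom graph of $b$ computed inside $[a,\top]$ has as vertices only those $a_i\prec b$ with $a_i\ge a$, an induced subgraph of the full coatom graph, and connectivity does not pass to induced subgraphs. This does not matter, since your chaining argument only ever uses the lower intervals $[\bot,c_t]$, and those genuinely do inherit the condition.

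The substantive divergence, and the gap, is in the other direction. The paper does not attempt the purely combinatorial implication ``flag connected $\Rightarrow$ local condition.'' Because Theorem~\ref{tem:connect} only claims the replacement works inside Theorems~\ref{tem:rmcp} and~\ref{tem:rmcc}, in the ``only if'' half one already has a polytope in hand; the paper then observes that for each face $a$ the coatom graph is the facet--ridge graph of $a$, i.e.\ the $1$-skeleton of its polar, which is connected for geometric reasons. Your route would yield a stronger combinatorial equivalence, but the key step ``$\mc{P}$ flag connected implies $[\bot,a]$ flag connected'' is precisely the passage from flag connectedness to \emph{strong} flag connectedness, and you have not justified it: a flag path in $\mc{P}$ between two extensions of chains in $[\bot,a]$ can leave the set of flags through $a$, and there is no evident retraction back. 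Either supply that inheritance lemma separately, or follow the paper and invoke the polytope for that half.
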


\begin{proof}
For the `only if' direction, this is the graph of the facets and ridges of each face of the polytope, which is the 1-skeleton of its order dual, and the 1-skeleton of a polytope is always connected. 

For the `if' direction, consider two flags $\mc{F} = [\bot, f_0, \dots, f_{d-1}, \top]$, $\mc{G} = [\bot, g_0, \dots, g_{d-1}, \top]$ of a diamond lattice satisfying the new condition. 
We will build a path from $\mc{F}$ to $\mc{G}$ in the flag graph inductively from facets down to vertices represented by the following object at each step. 
For $c \in [0,d-1]$ we claim there is a sequence of faces 
$H_c = [h_{c,1}, \dots, h_{c,t_c}]$ such that for every $k \in [c,d-1]$ the subsequence of rank $k$ faces begins with $f_k$, ends with $g_k$, and for any two consecutive faces $h_{c,i}$, $h_{c,j}$ in the subsequence with $i<j$ the following holds. 
$h_{c,i} \vee h_{c,j}$ is last face of rank $k+1$ in the sequence before $h_{c,j}$ if $k < d-1$ and is $\top$ if $k=d-1$, and $h_{c,i} \wedge h_{c,j}$ is the last face of rank $k-1$ in the sequence before $h_{c,j}$ if $k > c$ and is some face of rank $c-1$ if $k=c$. 

Before seeing why the claim holds, we will see why it implies the lattice is flag connected.  Consider the sequence $H_0$, disregarding the faces of $\mc{F}$ in this sequence.  Starting from $\mc{H}_0 = \mc{F}$ we get a sequence of flags $\mc{H}_i = \mc{H}_{i-1} \cup h_{0,i} \setminus h_{0,j}$ where $h_{0,j}$ is the face of $\mc{H}_{i-1}$ with the same rank as $h_{0,i}$.  Note that $h_{0,j}$ is also the last face in $H_0$ before $h_{0,i}$ of the same rank.  By the claim, $h_{0,i} \vee h_{0,j}$ is the face of $\mc{H}_i$ with rank $k+1$ and $h_{0,i} \wedge h_{0,j}$ is the face of $\mc{H}_i$ with rank $k-1$, so this is a path in the flag graph from $\mc{F}$ to $\mc{G}$. 

Now we will see why the claim holds. The case $c=d-1$ is exactly the new condition for the face $\top$.  Assume by induction that the claim hold for the case $c$ and let $h_{c,x}$, $h_{c,y}$, $h_{c,z}$ be consecutive faces in the subsequence of faces with rank $c$.  By the new condition for $h_{c,y}$ there is a sequence $ E_y = [e_{y,1}, \dots e_{y,s_y}]$ such that $e_{y,1} = h_{c,x} \wedge h_{c,y}$, $e_{y,s_y} = h_{c,y} \wedge h_{c,z}$, $e_{y,i} \vee e_{y,i+1} = h_{c,y}$, and $e_{y,i} \wedge e_{y,i+1}$ has rank $c-1$.  Inserting $E_y$ after each face $h_{c,y}$ of rank $c$ in the sequence $H_c$ then gives the desired sequence $H_{c-1}$.  By induction we get a sequence $H_0$ that determines a path in the flag graph between $\mc{F}$ and $\mc{G}$ in the flag graph, and since this hold for any pair of flags, the flag graph is connected. 
\end{proof}

\section{Definitions}\label{sec:def2}

So far we have considered realizability in Euclidean space in terms of the facet-vertex or facet-ray matrices, now we will give definitions for finding realizations in other geometric spaces in terms of the Gramian matrix of a polytope. 
A geometric space is a level set of a bilinear form  $\phi$,  
which is a function on two vectors of the form $\phi(x,y)=x^*\Phi y$ for some matrix $\Phi$. 
If $\Phi = I$ is the identity matrix, for example, then $\phi(x,x)=\left<x,x\right>$ is the standard inner product and $\phi(x) = \|x\|^2$ is the standard norm squared.  
A $d$ dimensional affine space $\mb{A}^d$ may be represented in homogeneous coordinates as a non-zero level set of a linear functional on $\mb{R}^{d+1}$, and polyhedra in $\mb{A}^d$ are given by the intersection of $\mb{A}^d$ with a cone in $\mb{R}^{d+1}$, not necessarily pointed. 
Similarly, a geometric space $\mb{X}$ is given by a connected component of a non-zero level set of $\phi$, 
and polyhedra in $\mb{X}$ are given by the intersection of $\mb{X}$ with a cone. 

We will mostly be concerned with realizing cones. 
Recall the generators and cogenerators of a given cone are not unique.  However, we can use $\phi$ to define a unique set of cogenerators for certain cones as follows.  A hyperplane is called lightlike when $\phi$ vanishes on the hyperplane away from the origin.  A half-space bounded by a hyperplane that is not lightlike has the form $h^{\phi, \leq 0} = \{ x: \phi(h,x) \leq 0 \}$ for a unique {\bf outward normal} vector $h$ such that $|\phi(h)| = 1$.  Observe that if $\phi(h,h) = 0$ then $h$ is the the hyperplane $h^{\phi, = 0} = (\Phi h)^{=0}$. 
For a cone $C$ such that no facet supporting hyperplane is lightlike, the outward normals $h_i$ of the facet supporting half-spaces are cogenerators and the {\bf Gramian} $G$ of $C$ with respect to $\phi$ has entries $[G]_{i,j} = \phi(h_i,h_j)$.  

We will refer to elements of a lattice as the corresponding object in the face lattice of a polytope. 
A {\bf cycle} is a sequence of facets $F_{i_1},\cdots,F_{i_d}$ such that $\bigwedge_{k=1}^d F_{i_k}$ is a vertex and, a {\bf super cycle} is a cycle with an additional facet such that $\bigwedge_{k=1}^{d+1} F_{i_k} = \bot$. 
Note that while D\'iaz called these maximal cycles, we will call these super cycles instead to emphasize that they are not cycles. 
A cycle or super cycle induces a flag with faces $\bigwedge_{k=1}^t F_{i_k}$ for $1\leq t \leq d$, 
$\top$, $\bot$. Note the flag graph of a combinatorial polytope $\mc{P}$ is always bipartite, with 
bipartition given for each flag by the orientation of the outward normal vectors of the facets of a polytope  realizing $\mc{P}$ in a cycle inducing that flag. 
With this in mind, we we will require the flag graph to be bipartite and say two super cycles have the same {\bf orientation} when the induced flags are in the same bipart.  
We denote the minor of a matrix $G$ with rows $i_1\cdots i_k$ and columns $j_1\cdots j_l$ by $[G]{i_1\cdots i_k \atop j_1\cdots j_l}$.  
The {\bf signature} of a matrix, or bilinear form, is the pair $(p,n)$ where $p$ and $n$ are the total ranks of eigenspaces with positive and negative eigenvalues respectively.  
For $\Phi$ with signature $(d+1,0)$, such as with $\Phi = I$, a level set $\mb{X}$ has spherical geometry, and for signature $(d,1)$, $\mb{X}$ has hyperbolic geometry. 
A linear transformation $Q$ is an {\bf orthogonal} transformation of $\phi$ if it preserves $\phi$, $Q^*\Phi Q = \Phi$.  In the case $\Phi = I$, these are the usual orthogonal transformations.

We have defined everything needed to understand the statement of Theorem \ref{tem:gram}, but its proof uses exterior algebra, which we briefly review now. 
The exterior power $\bigwedge^m X^*$ is the space of $m$-ary multilinear functions on $X$ that vanish on linearly dependent vectors, and the dual 
$\bigwedge^m X$ consists of equivalence classes of $m$-tuples, denoted $x_1\wedge \cdots \wedge x_m$, where two $m$-tuples are equivalent when they evaluate to the same result by every function in $\bigwedge^m X^*$, so in particular 
$ y_1 \wedge \cdots \wedge y_m = 0 $ 
when $\{y_1, \dots, y_m\}$ is linearly dependent.  
For any real vector space, $\bigwedge^0 X:= \mathbb{R}$ by convention. 
Transpositions on an $m$-tuple alternate sign, 
\[(\cdots \wedge x_i \wedge \cdots \wedge x_j \wedge \cdots) = - (\cdots \wedge x_j \wedge \cdots \wedge x_i \wedge \cdots).\]   
We also treat $(\wedge):\bigwedge^n X \times \bigwedge^m X \to \bigwedge^{n+m} X $ as a function acting on exterior powers by concatenation 
\[(x_1 \wedge \cdots \wedge x_n) \wedge (x_{n+1} \wedge \cdots \wedge x_{n+m}) = x_1 \wedge \cdots \wedge x_{n+m}. \]   
Given a bilinear form $\phi$ on $X$ we define a bilinear form on $\bigwedge^m X$ by 
\[(\bigwedge{\!}^m \phi)(y_1 \wedge \cdots \wedge y_n, x_1 \wedge \cdots \wedge x_n):= {\rm det}(A).\] 
where $[A]_{i,j} = \phi(y_i,x_j)$. 
Recall that real vector spaces of the same rank are isomorphic, and notice the spaces $\bigwedge^m X$ and $\bigwedge^{r-m} X$ have the same rank ${r \choose m} = {r \choose r-m}$.  The {\bf Hodge star operator} $\star: \bigwedge^m X \to \bigwedge^{r-m} X$ is the canonical isomorphism between these defined with respect to $\phi$ as follows. 
In the case $m=r$, $\star$ is the unique linear map such that $\star(y_1 \wedge \cdots \wedge y_r):=1$ for any positively oriented basis $\{y_i\}_{i\in [r]}$ of $X$ that is orthonormal with respect to $\phi$, $|\phi (y_i)|=1$, $\phi(y_i,y_j) = 0$ for $i\neq j$.  
Note in the case where $\phi$ is the standard inner product, $\star$ on $\bigwedge^r X$ is the determinant. 
This gives two ways to define linear functionals on $v\in\bigwedge{\!}^{r-m} X$; either use $u\in \bigwedge{\!}^{m} X$ to augment $v = (x_{m+1} \wedge \cdots \wedge x_r)$ to $u\wedge v = (x_1 \wedge \cdots \wedge x_r) \in \bigwedge{\!}^{r} X$ and take $\star$ of the result, or use $u\in \bigwedge{\!}^{r-m} X$ and take the bilinear form of the pair $(\bigwedge{\!}^{r-m} \phi)(u,v)$.  The Hodge star operator is the correspondence between these two ways of representing linear functionals. 
That is, 
$\star := \tau^{-1} \circ \psi$ where
\begin{align*}
\psi: \bigwedge{\!}^m X \ \to \left(\bigwedge{\!}^{r-m} X \right)^*,& \hspace{12pt}  \psi(u) := v \mapsto \star(u\wedge v) \\
\tau: \bigwedge{\!}^{r-m} X \to \left(\bigwedge{\!}^{r-m} X \right)^*,& \hspace{13pt} \tau(x) := v \mapsto (\bigwedge{\!}^{r-m} \phi )(u, v) 
\end{align*}
In the case of $m=1$, $\bigwedge{\!}^1 X = X$, $\bigwedge{\!}^1 \phi = \phi$ this becomes   
\[ \phi (\star(x_1 \wedge \cdots \wedge x_{r-1}), x_r) = \star(x_1 \wedge \cdots \wedge x_{r-1} \wedge x_r ).  \] 
Note that in $\mb{R}^3$ with $\Phi = I$ this becomes 
$\left< x_1 \times x_2, x_3 \right> = \det([x_1 x_2 x_3])$.

\section{The Gramian}\label{sec:gram}

This section will give realizability conditions in term of the Gramian of a polytope. 

\begin{tem}\label{tem:gram}
Given a $n \times m$ relation $\mc{R}$ and symmetric bilinear form $\phi$, 
there is a type $\mc{R}$ polytopal cone with Gramian $G$ if and only if 
the maxbiclique lattice of $\mc{R}$ is a rank $d+1$ diamond lattice with connected bipartite flag graph, and $G$ is a $n \times n$ symmetric matrix with diagonals $\pm 1$ and the same signature as $\phi$
that satisfies the following: 
\begin{enumerate}[label=\arabic*]
 \item\label{con:vertrank} For every vertex of $\mc{R}$ and all facets $F_{i_1}\cdots F_{i_n}$ incident to it, ${\rm rank}([G]{i_1 \cdots i_n \atop i_1 \cdots i_n}) = d$.
 \item\label{con:supcyc} For every pair of super cycles  $F_{i_1},\cdots, F_{i_{d+1}}$ and $F_{j_1},\cdots, F_{j_{d+1}}$  with the same orientation, $\det([G]{i_1\cdots i_{d+1} \atop j_1\cdots j_{d+1}}) {\rm det}(\phi) > 0$.
\end{enumerate}
Moreover, $G$ parameterizes the realization space of type $\mc{R}$ cones modulo orthogonal transformations of $\phi$. 
\end{tem}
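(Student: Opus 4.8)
The plan is to translate between the Gramian $G$ and a facet-ray matrix and then apply Theorem \ref{tem:rmcc}. Take $\phi$ nondegenerate on $\mb{R}^{d+1}$, as it is in the spherical and hyperbolic cases. A symmetric $n \times n$ matrix with the signature of $\phi$ is precisely one of the form $G = H^*\Phi H$ for a rank $d+1$ matrix $H$ of size $(d+1)\times n$, and then the diagonal condition $[G]_{ii} = \pm 1$ says the columns $h_i$ of $H$ are normalized outward normals, $|\phi(h_i)| = 1$, while $H$ is determined by $G$ up to $H \mapsto QH$ with $Q$ orthogonal for $\phi$, since two frames with the same Gram matrix relative to a nondegenerate form differ by an orthogonal transformation. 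A cone whose $\phi$-outward facet normals are the $h_i$ is $C = \{x :\ \phi(h_i,x) \le 0\ \forall i\}$; if the columns of $W$ are its generators then its facet-ray matrix is $N = (\Phi H)^* W = H^*\Phi W$, while $G = H^*\Phi H$ is recovered from $C$ as the $\phi$-Gram matrix of its cogenerators $\Phi h_i$. So, for a valid $G$, it suffices to produce from $H$ a matrix $W$ making $N = H^*\Phi W$ a rank $d+1$ filled $0$-incidence matrix of $\mc{R}$: by Theorem \ref{tem:rmcc} this $N$ is then the facet-ray matrix of a type $\mc{R}$ cone, which by the decomposition uniqueness in the proof of Theorem \ref{tem:rmcc} has cogenerators $\Phi h_i$, hence $\phi$-Gramian $G$.

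For the `if' direction, fix a valid $G$ and the combinatorial hypotheses, and for each vertex $v$ of $\mc{R}$ set $I_v = \{i : (i,v)\in\mc{R}\}$. Condition \ref{con:vertrank} says the $\phi$-Gram matrix of $\{h_i : i\in I_v\}$ has rank $d$, which — $\phi$ being nondegenerate on $\mb{R}^{d+1}$ — forces that set to span a $d$-dimensional subspace on which $\phi$ restricts nondegenerately, so its $\phi$-orthogonal complement is a line. Choosing $d$ facets $F_{i_1},\dots,F_{i_d}$ incident to $v$ with linearly independent normals, set $w_v = \pm\star(h_{i_1}\wedge\cdots\wedge h_{i_d})$, so that $[N]_{i,v} = \phi(h_i,w_v) = \pm\star(h_{i_1}\wedge\cdots\wedge h_{i_d}\wedge h_i)$ vanishes for $i\in I_v$. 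For $i\notin I_v$ the tuple $F_{i_1},\dots,F_{i_d},F_i$ is a super cycle, and varying $i$ these induce the same flag and so have the same orientation; hence condition \ref{con:supcyc}, through the top-degree relation between $\bigwedge^{d+1}\phi$, the Hodge star, and the determinant, makes $\det([G]{i_1\cdots i_d i\atop i_1\cdots i_d i'})\det(\phi)$ have the sign of $[N]_{i,v}[N]_{i',v}$, so all $[N]_{i,v}$ with $i\notin I_v$ share one nonzero sign, which the sign choice in $w_v$ makes negative; in particular $w_v\in C$. Applying condition \ref{con:supcyc} to a super cycle paired with itself also forces every super cycle to have independent normals, and together with the full-dimensionality of $C$ (since $0\notin\text{conv}\{\Phi h_i\}$ — a vanishing convex combination would be supported on $\bigcap_v I_v = \emptyset$) and the diamond and flag-connected conditions, an order monomorphism from the maxbiclique lattice of $\mc{R}$ into the face lattice of $C$, followed by Lemma \ref{apinj} as in the proof of Theorem \ref{tem:rmcp}, shows the $w_v$ are exactly the extreme rays of $C$; hence they span $\mb{R}^{d+1}$ and $N$ has rank $d+1$, and Theorem \ref{tem:rmcc} provides a type $\mc{R}$ cone with facet-ray matrix $N$ and Gramian $G$. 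The maxbiclique-lattice hypotheses are exactly those of Theorem \ref{tem:rmcc} plus bipartiteness of the flag graph, which is what makes ``same orientation'' meaningful.

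For the `only if' direction, if $C$ is a type $\mc{R}$ cone with Gramian $G$, then Lemma \ref{lem:mbc} identifies the maxbiclique lattice of $\mc{R}$ with the face lattice of $C$, a rank $d+1$ diamond lattice whose flag graph is connected and, being that of a convex cone, bipartite; writing $G = H^*\Phi H$ with $H$ of rank $d+1$ gives the signature, normalization of the outward normals gives the diagonal, and conditions \ref{con:vertrank} and \ref{con:supcyc} record the geometric facts that the facet normals through a vertex span the $\phi$-orthogonal complement of its ray (a $d$-space on which $\phi$ is nondegenerate) and that super cycles whose flags lie in one bipartition class have equally oriented normal frames, again via the $\bigwedge^{d+1}\phi$ identity. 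For the moduli statement, an orthogonal $Q$ sends $C$ to a cone with the same outward-normal Gram matrix, hence the same Gramian; conversely, two type $\mc{R}$ cones with Gramian $G$ have full-rank cogenerator matrices $H, H'$ with $H^*\Phi H = G = H'^*\Phi H'$, so $H' = QH$ for an orthogonal $Q$, and since a decomposition of a facet-ray matrix determines its cone up to the corresponding linear transformation (as in the proof of Theorem \ref{tem:rmcc}), the two cones agree up to $Q$.

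The main obstacle is the sign and orientation bookkeeping in the `if' direction. Condition \ref{con:supcyc} compares only super cycles within a common bipartition class, so one must check carefully — both locally, that the super cycles obtained by appending each non-incident facet to a fixed cycle through a vertex induce one and the same flag, and globally, in interaction with flag-connectedness and the diamond condition — that the per-vertex sign choices are coherent and that the resulting $N$ really has incidence pattern $\mc{R}$. The secondary technical point is obtaining ${\rm rank}\,N = d+1$ from the abstract maxbiclique lattice rather than from an ambient polytope, where the full-dimensionality of the constructed cone and the nonvanishing of super-cycle minors are the crucial inputs.
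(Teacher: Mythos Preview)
Your approach is essentially the paper's: factor $G=H^*\Phi H$, build candidate generators $w_v$ via the Hodge star, use Condition~\ref{con:vertrank} for the zero entries and Condition~\ref{con:supcyc} for the signs, and feed the resulting $N$ into Theorem~\ref{tem:rmcc}. Two points are worth noting.

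There is one small but genuine gap. You pick $d$ facets through $v$ ``with linearly independent normals'' and then assert that appending any non-incident $F_i$ yields a super cycle. But \emph{super cycle} is a combinatorial notion: the first $d$ facets must have meet equal to the vertex $v$ in the maxbiclique lattice, and linear independence of their normals (which you get from Condition~\ref{con:vertrank}) does not guarantee this---their meet could be a higher-rank face, in which case Condition~\ref{con:supcyc} does not apply to your tuple and the sign argument stalls. The paper handles this in the opposite order: it first chooses a \emph{combinatorial} cycle at $v$ (these exist in any graded lattice in which each element is the meet of the coatoms above it) and then derives linear independence of the normals from Condition~\ref{con:supcyc} applied to the resulting super cycle paired with itself. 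The fix to your argument is exactly that reversal.

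On the organizational side, your per-column sign choice (using two super cycles that share their first $d$ facets, hence induce the same flag) is a legitimate variant of the paper's global-orientation choice followed by a single sign flip; both produce a filled $0$-incidence matrix. Your Lemma~\ref{apinj} argument for $\operatorname{rank} N=d{+}1$ is correct and actually more explicit than the paper, which simply asserts the rank; but note that once that argument shows the face lattice of $C$ is isomorphic to the maxbiclique lattice of $\mc{R}$, you have already exhibited a type~$\mc{R}$ cone with Gramian $G$, so the subsequent invocation of Theorem~\ref{tem:rmcc} is redundant.
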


D\'{i}az proved a theorem very similar to this giving conditions for finding a cone with Gramian $G$, but with $\mc{R}$ replaced by a combinatorial polytope, and  
the proof of Theorem \ref{tem:gram} will follow that given by D\'iaz \cite{diaz}, except 
Theorem \ref{tem:rmcc} will be used in place of the corresponding result for combinatorial polytopes.  
Theorem $\ref{tem:rmcc}$ makes use of both the generators and cogenerators of a cone, but Theorem \ref{tem:gram} only makes use of the cogenerators.  To deal with this, we use the Hodge star operator $\star$ to construct generators of a cone from its cogenerators in a way that is analogous to finding the generators of a polygonal cone in $\mb{R}^3$ with $\Phi=I$ by taking the cross product of outward normals of neighboring facets.  




\begin{proof}[Proof of Theorem \ref{tem:gram}]
Since we are only weakening one side of a biconditional statement in the theorem given by D\'iaz, we only have to consider the argument showing one direction.  That is, we will show that if a relation and a matrix satisfy these conditions, then they can be realized by such a cone.  We only consider $d\geq 1$, since the theorem is trivial otherwise.  We can represent $\phi$ by a real symmetric matrix $\Phi$, and 
since $G$ and $\Phi$ are real symmetric matrices with the same signature, there exists a $n \times (d\!+\!1)$ matrix $H$ such that $G=H^*\Phi H$.  We denote columns of $H$ by $h_j$, and we claim $C:=\bigcap_j h_j^{\phi,\leq 0}$ 
is a polytopal cone of type $\mc{P}$.  Following in the footsteps of \cite{diaz}, we show this by using the Hodge star operator to find generators of $C$, which we collect in a matrix $W$, and show that $N=(\Phi H)^* W = H^*\Phi W$ is a rank $d\!+\!1$ filled 0-incidence matrix of $\mc{P}$.  

With this in mind, choose a cycle $F_{i_{j,1}}, \cdots, F_{i_{j,d}}$ incident to each vertex $v_j$ of $\mc{P}$ such that these all have the same orientation, and let 
\[ w_j = \star(h_{i_{j,1}} \wedge \cdots \wedge h_{i_{j,d}}).\]   
Since $\mc{P}$ is a diamond lattice with rank at least 2, we can choose another facet $F_{i_{j,d\!+\!1}}$ that is not incident to $v_j$ giving a super cycle.  By Condition \ref{con:supcyc} for all $j$ we have 
${\rm det}([G]{i_{j,1}\cdots i_{j,d\!+\!1} \atop i_{j,1}\cdots i_{j,d\!+\!1}}) \neq 0$, so the vectors $\{h_{i_{j,k}}\}_{k\in [d+1]}$ are linearly independent and $h_{i_{j,1}} \wedge \cdots \wedge h_{i_{j,d}}$ is not the origin of $\bigwedge^d \mb{R}^{d+1}$. 
Therefor, $w_j \neq \mathbb{0}$, since $\star$ is an isomorphism.  We let $W$ have columns $w_j$ and $N=H^*\Phi W$. 

We now show that $N$ is a 0-incidence for the relation between the irreducibles of $\mc{P}$.  
For $v_j < F_i$ incident, by Condition \ref{con:vertrank} we have 
\[[N]_{i,j} = \phi (h_i,w_j) = \phi (h_i,\star(h_{i_{j,1}} \wedge \cdots \wedge h_{i_{j,d}})) = \star(h_i \wedge h_{i_{j,1}} \wedge \cdots \wedge h_{i_{j,d}}) = 0, \] 
since $\{h_i, h_{i_{j,1}}, \dots, h_{i_{j,d}}\}$ has span at most $d$ and is therefor linearly dependent.
Alternatively consider a pair $v_j \not< F_i$ that are not incident.  As we have seen, this means 
$\{h_{i_{j,1}}, \cdots, h_{i_{j,d}}, h_i\}$ are linearly independent and therefor a basis of $\mb{R}^{d+1}$. 
We have already that $\phi (h_{i_{j,k}}, w_j)=0$ for $k \in [d]$, and since $\phi$ has full rank we cannot have $\phi (x, w_j)=0$ for every vector $x$ of a basis, so we must have $\phi (h_i,w_j) \neq 0$.  Thus $[N]_{i,j}=0$ if and only if $F_i$ and $v_j$ are incident in $\mc{P}$.

We would like to show $N$ is non-positive.  We will actually show $N$ is either that or nonnegative, which can be fixed by redefining $W$ to be $-W$, so this is enough.  We do this by showing all nonzero entries have the same sign. 
Recall that we chose cycles that have the same orientation, but did not specify which orientation, so we should not expect to have chosen correctly. 
Let $(i,j)$ and $(\imath,\jmath)$ both be indices of a pair of irreducible of $\mc{P}$ that are not incident.  By Condition \ref{con:supcyc} we have the following. 
\[ \phi (h_i,w_j)\phi (h_\imath,w_\jmath)
 = {\rm det}\left([G]^{ i_{j,1} \cdots i_{j,d}\: i}_{ i_{\jmath,1} \cdots i_{\jmath,d}\: \imath }\right)
  {\rm sign}(\phi ) >0 \]

This gives us that $N=(\Phi H)^* W$ is a rank $d\!+\!1$ filled 0-incidence matrix of $\mc{P}$, adjusting $W$'s sign if needed. By Theorem \ref{tem:rmcc}, $C$ is a type $\mc{P}$ cone and the columns of $\Phi H$ are cogenerators of $C$.  Thus, $C$ has outward normals $\{h_i\}_{i\in [n]}$ and Gramian $G$ with respect to $\phi$. 
\end{proof}

We now state versions the above theorem for spherical and hyperbolic polytopes.  D\'iaz gave proofs of these for combinatorial polytopes, which work just as well in this context, so the proofs are not included here.  Let $\mb{S}^d = \{x : \|x\| = 1\}$ denote the $d$-dimensional unit sphere in $\mb{R}^{d+1}$. A spherical $d$-polytope is the intersection of a pointed polytopal cone in $\mb{R}^{d+1}$ with $\mb{S}^d$, and the Gramian of this polytope is the Gramian of the cone. 

\begin{tem}
Given a $n \times m$ relation $\mc{R}$, 
there is a type $\mc{R}$ spherical polytope with Gramian $G$ if and only if 
the maxbiclique lattice of $\mc{R}$ is a rank $d+1$ diamond lattice with connected bipartite flag graph, and $G$ is a $n \times n$  rank $d+1$ symmetric positive semi-definite matrix with all diagonals $1$ 
that satisfies the following: 
\begin{enumerate}[label=\arabic*]
 \item For every vertex of $\mc{R}$ and all facets $F_{i_1}\cdots F_{i_n}$ incident to it, ${\rm rank}([G]{i_1 \cdots i_n \atop i_1 \cdots i_n}) = d$.
 \item\label{con:diaz'} For every pair of super cycles  $F_{i_1},\cdots, F_{i_{d+1}}$ and $F_{j_1},\cdots, F_{j_{d+1}}$  with the same orientation, $\det([G]{i_1\cdots i_{d+1} \atop j_1\cdots j_{d+1}}) > 0$. 
\end{enumerate}
Moreover, $G$ parameterizes the isometric moduli space of type $\mc{R}$ spherical polytopes. 
\end{tem}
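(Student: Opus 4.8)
The plan is to read the theorem off Theorem~\ref{tem:gram} by specializing the bilinear form and then transporting the conclusion across the standard correspondence between cones in $\mb{R}^{d+1}$ and spherical polytopes in $\mb{S}^d$. Take $\phi$ to be the standard inner product on $\mb{R}^{d+1}$, so $\Phi = I$, where $d$ is fixed by the requirement that the maxbiclique lattice of $\mc{R}$ have rank $d+1$. Then $\phi$ has signature $(d+1,0)$, and for a symmetric $n\times n$ matrix ``signature $(d+1,0)$'' is the same as ``positive semidefinite of rank $d+1$''; since $\phi$ is positive definite no hyperplane through the origin is lightlike, so every facet of a cone has a genuine (unique, unit) outward normal $h_i$ with $\phi(h_i) = \|h_i\|^2 = 1$, which collapses ``diagonals $\pm 1$'' to ``all diagonals $1$'' once positive semidefiniteness is imposed; and $\det(\phi) = \det(I) = 1 > 0$, so the super-cycle condition of Theorem~\ref{tem:gram}, namely $\det([G]\cdots)\,\det(\phi) > 0$, becomes the super-cycle positivity $\det([G]\cdots) > 0$ of the present theorem. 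The hypotheses on the maxbiclique lattice and the rank condition at vertices are word for word the same as in Theorem~\ref{tem:gram}, so these statements describe the identical situation, and as there we may assume $d \geq 1$.

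With this dictionary the biconditional follows at once. For the ``if'' direction, if $\mc{R}$ and $G$ satisfy the hypotheses here then they satisfy the hypotheses of Theorem~\ref{tem:gram} for this $\phi$, so there is a type~$\mc{R}$ polytopal cone $C \subset \mb{R}^{d+1}$ with Gramian $G$; because $G$ has rank $d+1$ the cone is full dimensional in $\mb{R}^{d+1}$, so $P := C \cap \mb{S}^d$ is a type~$\mc{R}$ spherical $d$-polytope whose Gramian is by definition the Gramian $G$ of $C$. For the ``only if'' direction, if $P$ is a type~$\mc{R}$ spherical polytope with Gramian $G$, then the cone $C$ over $P$ (the nonnegative linear combinations of points of $P$) is a type~$\mc{R}$ pointed polytopal cone in $\mb{R}^{d+1}$ with Gramian $G$ with respect to the standard inner product, so applying Theorem~\ref{tem:gram} to $C$ and $G$ and reading the conclusion back through the dictionary above yields precisely the asserted conditions on $\mc{R}$ and $G$.

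For the moduli statement, ``cone over'' and ``intersect with $\mb{S}^d$'' are mutually inverse bijections between type~$\mc{R}$ pointed polytopal cones in $\mb{R}^{d+1}$ and type~$\mc{R}$ spherical $d$-polytopes, and they carry Gramian to Gramian by definition. When $\Phi = I$ the orthogonal transformations of $\phi$ are exactly the elements of $O(d+1)$, which is precisely the isometry group of $\mb{S}^d$ acting by restriction, and each such $Q$ preserves $\mb{S}^d$, so $(QC)\cap \mb{S}^d = Q(C\cap\mb{S}^d)$ and the correspondence is equivariant. Hence it descends to a bijection between type~$\mc{R}$ cones modulo orthogonal transformations of $\phi$ and type~$\mc{R}$ spherical polytopes modulo isometries of $\mb{S}^d$; since the ``moreover'' part of Theorem~\ref{tem:gram} says $G$ parameterizes the former, it parameterizes the isometric moduli space of type~$\mc{R}$ spherical polytopes.

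The geometric content — that a realized cone forces the rank, diagonal and sign conditions, and that conversely those conditions let one reconstruct a cone via Hodge-star generators — is already packaged inside Theorem~\ref{tem:gram} following D\'iaz, so the only real work is the bookkeeping above. I expect the point most in need of care to be the equivariance of the cone--sphere correspondence: checking that $O(d+1)$ is exactly $\mathrm{Isom}(\mb{S}^d)$ acting by restriction, that passing to the cone and back is a genuine bijection of realization spaces, and that ``$G$ positive semidefinite of rank $d+1$'' is simultaneously equivalent to ``$G$ has the signature of $\phi$'' and strong enough to force the cone, hence the spherical polytope, to be full dimensional.
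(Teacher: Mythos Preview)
Your proposal is correct and matches the paper's treatment: the paper does not write out a proof of this theorem at all, saying only that D\'iaz's arguments for combinatorial polytopes ``work just as well in this context,'' and your reduction to Theorem~\ref{tem:gram} with $\Phi = I$ together with the definitional cone--sphere correspondence is exactly the specialization one expects. The bookkeeping you flag---that signature $(d+1,0)$ for a symmetric matrix is positive semidefinite of rank $d+1$, that this forces the $\pm 1$ diagonals to be $+1$, that $\det(\phi)=1$ simplifies Condition~2, and that $O(d+1)$ is the isometry group of $\mb{S}^d$---is all correct and is precisely what the paper leaves implicit.
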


Let $\Phi = {\rm diag}(1,\dots,1,-1)$ so $\phi(x) = [x]_1^2 + \dots + [x]_d^2 -[x]_{d+1}^2$, and let $\mb{H}^d = \{ x: \phi(x) = -1, [x]_{d+1} > 0\}$  denote the $d$-dimensional upper hyperbola in $\mb{R}^{d+1}$. 
A finite-volume hyperbolic $d$-polytope $P=\mb{H}^d \cap C$ is the intersection of $\mb{H}^d$ with a pointed polytopal cone 
\[ C \subset \{x : \phi(x) \leq 0, [x]_{d+1} \geq 0 \}.\] 
Again the Gramian of $P$ is the Gramian of $C$.  When a ray of $C$ does not intersect $\mb{H}^d$, we say the corresponding vertex, which appear as any point in $\mb{H}^d$, is an ideal vertex of $P$.  An isometry of $\mb{H}^d$ is an orthogonal transformation of $\phi$. 

\begin{tem}
Given a $n \times m$ relation $\mc{R}$ and vertices $\mc{W} \subset [m]$ of $\mc{R}$, 
there is a finite-volume type $\mc{R}$ hyperbolic polytope with all vertices except those of $\mc{W}$ in $\mb{H}^d$ and Gramian $G$ if and only if 
the maxbiclique lattice $\mc{P}$ of $\mc{R}$ is a rank $d+1$ diamond lattice with connected bipartite flag graph, and $G$ is a $n \times n$ symmetric matrix with all diagonals $1$ 
that satisfies the following: 
\begin{enumerate}[label=\arabic*]
 \item For every vertex of $\mc{R}$ and all facets $F_{i_1}\cdots F_{i_n}$ incident to it, ${\rm rank}([G]{i_1 \cdots i_n \atop i_1 \cdots i_n}) = d$.
 \item\label{con:diaz'} For every pair of super cycles $F_{i_1},\cdots, F_{i_{d+1}}$ and $F_{j_1},\cdots, F_{j_{d+1}}$  with the same orientation, $\det([G]{i_1\cdots i_{d+1} \atop j_1\cdots j_{d+1}}) < 0$. 
 \item For $2\leq s \leq d$ and every truncated cycle $F_{i_1},\cdots, F_{i_s}$ incident to a $(d\!-\!s)$-face $f\in \mc{P}$, if $f \in \mc{W}$ then $\det(G_{i_1 \cdots i_s}) = 0$ otherwise $\det(G_{i_1 \cdots i_s}) > 0$
 \item\label{con:diaz'} For every pair of super cycles $F_{i_1},\cdots, F_{i_{d+1}}$ and $F_{j_1},\cdots, F_{j_{d+1}}$ with the same orientation that are incident to a different vertex, $\det([G]{i_1\cdots i_{d+1} \atop j_1\cdots j_{d+1}}) > 0$. 
\end{enumerate}
Moreover, $G$ parameterizes the isometric moduli space of finite-volume type $\mc{R}$ hyperbolic polytopes with ideal vertices $\mc{W}$. 
\end{tem}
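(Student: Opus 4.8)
The plan is to follow the proof of Theorem \ref{tem:gram} almost verbatim, substituting Theorem \ref{tem:rmcc} for D\'iaz's characterization of the facet-ray matrices of a combinatorial polytope's cone, and then adjoining D\'iaz's bookkeeping for ideal vertices; the assertion being that his computation never uses that the input is a combinatorial polytope rather than a relation whose maxbiclique lattice is a rank $d+1$ diamond lattice with connected bipartite flag graph. As in that proof, one direction is routine: if $P = \mb{H}^d \cap C$ realizes $\mc{R}$ with Gramian $G$, then $C$ realizes $\mc{R}$, so by Theorem \ref{tem:rmcc} the maxbiclique lattice $\mc{P}$ of $\mc{R}$ is a rank $d+1$, flag connected, diamond lattice, and its flag graph is bipartite by the orientation-of-normals discussion of Section \ref{sec:def2}. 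The signature of $G = H^*\Phi H$ equals that of $\phi$ because the columns $h_i$ of $H$, being outward normals of the facets, span $\mb{R}^{d+1}$, and conditions \ref{con:vertrank}--4 follow by expanding the relevant minors using $\phi(\star(h_{i_1}\wedge\cdots\wedge h_{i_d}), x) = \star(h_{i_1}\wedge\cdots\wedge h_{i_d}\wedge x)$ together with the identity $\phi(w_j) = \pm\det(G_{i_{j,1}\cdots i_{j,d}})$ for a cycle of facets at a vertex $v_j$, whose sign records whether the ray of $v_j$ is timelike (an honest vertex, $v_j\notin\mc{W}$), lightlike (ideal, $v_j\in\mc{W}$), or spacelike (forbidden, as it would give infinite volume).

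For the converse, assume $\mc{R}$ and $G$ satisfy all the stated conditions, set $\Phi = {\rm diag}(1,\dots,1,-1)$, and factor $G = H^*\Phi H$ with $H$ of size $n\times(d+1)$, columns $h_i$. Put $C := \bigcap_i h_i^{\phi,\leq 0}$. Exactly as in Theorem \ref{tem:gram}, choose at each vertex $v_j$ of $\mc{P}$ a cycle of facets $F_{i_{j,1}},\dots,F_{i_{j,d}}$, all of one fixed orientation, let $w_j = \star(h_{i_{j,1}}\wedge\cdots\wedge h_{i_{j,d}})$, let $W$ be the matrix with these columns, and let $N = H^*\Phi W$. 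Conditions \ref{con:vertrank} and the super-cycle determinant conditions show, as in that proof, that $N$ is a rank $d+1$ filled $0$-incidence matrix of $\mc{R}$, after possibly replacing $W$ by $-W$ so that all nonzero entries are negative; Theorem \ref{tem:rmcc} then yields that $C$ is a type $\mc{R}$ cone with cogenerators $\Phi h_i$ and hence Gramian $G$ with respect to $\phi$.

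It remains to arrange that $C$ lies in the forward null cone $\{x : \phi(x)\leq 0,\ [x]_{d+1}\geq 0\}$ and that $P = \mb{H}^d \cap C$ has ideal vertex set exactly $\mc{W}$. This is where condition 3 enters: for a vertex cycle, $\phi(w_j,w_j)$ equals $\det(G_{i_{j,1}\cdots i_{j,d}})$ up to a fixed global sign, so the case $s=d$ makes $w_j$ lightlike precisely when $v_j\in\mc{W}$ and timelike otherwise, while the cases $2\leq s < d$ force every lower face's direction space to have the correct causal type, so that each such face is genuinely present in $\overline{\mb{H}^d}$. After normalizing the orientation of $W$ and, if necessary, replacing $H$ by $-H$ so that $C$ points forward, $P = \mb{H}^d \cap C$ is a finite-volume hyperbolic $d$-polytope of type $\mc{R}$ with ideal vertices $\mc{W}$ and Gramian $G$. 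For the moduli statement, $G = H^*\Phi H = (QH)^*\Phi(QH)$ whenever $Q^*\Phi Q = \Phi$, and $H$ is determined by $G$ up to such a $Q$ together with the sign choices above; since the $\Phi$-orthogonal transformations are exactly the isometries of $\mb{H}^d$, $G$ is a complete invariant of $P$ modulo isometry.

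The main obstacle is the third paragraph: matching condition 3 to the causal character of the vertex rays and the lower faces, and checking that spacelike vertices are genuinely excluded so that $P$ really has finite volume. Everything preceding it is a transcription of the proof of Theorem \ref{tem:gram} with Theorem \ref{tem:rmcc} in place of D\'iaz's combinatorial-cone lemma; the content of the argument lives in this last step, and the claim to be verified is that D\'iaz's determinant computations there depend only on the diamond and connectivity structure of $\mc{P}$, not on its having been given as a combinatorial polytope.
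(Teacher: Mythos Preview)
Your proposal is correct and takes essentially the same approach as the paper. In fact the paper does not give a proof of this theorem at all: immediately before stating the spherical and hyperbolic versions it says that D\'iaz gave proofs of these for combinatorial polytopes ``which work just as well in this context, so the proofs are not included here,'' meaning precisely what you outline---run D\'iaz's argument with Theorem~\ref{tem:rmcc} substituted for the combinatorial-cone input, exactly as was done explicitly for Theorem~\ref{tem:gram}.
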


\section{Concluding Remarks}\label{sec:rem}

The introduction mentioned that algebraic conditions for deciding realizability are known. 
Specifically, given a poset as subsets of a ground set of vertices ordered by containment, the realizability of this poset as a face lattice can be decided using the definition of a face as follows.  Decide if there is a vector for each vertex such that for every subset of vertices, those vectors satisfy the conditions of being a face if and only if they are the vertices of some element of the poset.  This gives the following realizability conditions, which are proved in \cite{grun}. 

\begin{tem}\label{tem:pwrset}
A collection $\mathcal{P}$ of subsets of $J:=\{1,\cdots,n\}$ that includes $\emptyset,\{j\},J\in \mc{P}$ for all $j\in J$ ordered by containment, 
is realizable if and only if there are vectors $w_j\in\mathbb{R}^d$ such that the following holds.  For any proper subset $F\subsetneq J$, there is a vector $h\in\mathbb{R}^d$ with $\left<h,w_j\right> = 1$ for all $j\in F$ and $\left<h,w_j\right> < 1$ for all $j \nin F$ if and only if $F\in \mc{P}$.
\end{tem}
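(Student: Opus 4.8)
The plan is to read the algebraic condition in the statement as the coordinate form of ``$F$ is the set of vertices lying on a common supporting hyperplane,'' and to match the abstract poset $\mc{P}$ against the face lattice of an honest polytope, using that a face of a polytope is the convex hull of the vertices it contains and that distinct faces have distinct vertex sets (so the face lattice of a polytope whose vertices are indexed by $J$ is itself a family of subsets of $J$ under inclusion).

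For the ``only if'' direction I would begin with a realization $Q$ whose vertices are indexed by $J$, translated so the origin is interior; then each facet $G_i$ has a supporting hyperplane normalizable to $\langle h_i,x\rangle=1$ with $\langle h_i,x\rangle<1$ off $G_i$, every proper face $f$ is $Q$ intersected with the hyperplanes of the facets through it, and the average $h$ of those $h_i$ satisfies $\langle h,\cdot\rangle=1$ on $f$ and $<1$ elsewhere on $Q$, witnessing $V(f)$; the empty set is witnessed by $h=0$. Conversely any witness $h$ for a proper $F$ makes $\langle h,\cdot\rangle=1$ a supporting hyperplane of $Q$ whose contact face has vertex set exactly $F$, so $F\in\mc{P}$. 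This gives the stated equivalence for $Q$.

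For the ``if'' direction I would set $P=\mathrm{conv}\{w_j:j\in J\}$; the hypothesis on singletons shows the $w_j$ are distinct and are exactly the vertices of $P$, and for each proper $F\in\mc{P}$ the witness $h$ exhibits $\mathrm{conv}\{w_j:j\in F\}$ as a face of $P$ with vertex set $F$. Since $F$ is recovered as that vertex set, $F\mapsto\mathrm{conv}\{w_j:j\in F\}$ is an order-preserving and order-reflecting injection of $\mc{P}$ into the face lattice $\mathcal L(P)$; if I can show it is onto, then $\mc{P}\cong\mathcal L(P)$ and $P$ realizes $\mc{P}$.

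The main obstacle is precisely this surjectivity, and the delicate point inside it is the normalization ``$=1$'': a face of $P$ can be cut out by a hyperplane $\langle h,\cdot\rangle=1$ with $P\subseteq\{\langle h,\cdot\rangle\le 1\}$ only if its supporting hyperplane admits a positive right-hand side, which can fail when $P$ sits with the origin on or outside it. I would handle this by first moving the $w_j$ by a generic small vector so that the origin lies in the relative interior of $P$ — harmless for realizability of the abstract poset, and small enough that the finitely many witnesses for members of $\mc{P}$ (solutions of open inequality systems) persist — after which every proper face of $P$ is exposed by a functional with strictly positive maximum over $P$, hence is cut out by some $\langle h,\cdot\rangle=1$, so the contrapositive of the hypothesis puts its vertex set in $\mc{P}$. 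That would close the argument; this positioning step is the only place where genuine care is required.
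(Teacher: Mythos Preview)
The paper does not give its own proof of this statement; it simply records it and cites Gr\"unbaum. So there is nothing in the paper to compare your argument against, but your argument has a genuine gap worth naming.

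Your ``only if'' direction is fine. In the ``if'' direction you correctly isolate the obstruction: a proper face of $Q=\mathrm{conv}\{w_j\}$ supported by $\langle a,\cdot\rangle=b$ with $b\le 0$ cannot be renormalized to $\langle h,\cdot\rangle=1$, so the hypothesis cannot be invoked directly to put its vertex set into $\mc{P}$. Your fix is to translate the $w_j$ so that the origin lands in the relative interior of $Q$. But to then apply ``the contrapositive of the hypothesis'' you need the full biconditional for the \emph{translated} points $w_j'=w_j+v$, and you only checked one half (that each $F\in\mc{P}$ keeps a witness). The direction you actually use, namely ``$F$ has a witness with respect to the $w_j'$ implies $F\in\mc{P}$'', is never verified, and it does not follow from the hypothesis for the original $w_j$: if $h'$ witnesses $F$ for the $w_j'$ then $\langle h',w_j\rangle=c$ on $F$ and $<c$ off $F$ with $c=1-\langle h',v\rangle$, and only when $c>0$ can you rescale to a witness for the original $w_j$.

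This is not a cosmetic gap. Take $J=\{1,2,3\}$, $d=2$, $w_1=(1,0)$, $w_2=(-1,0)$, $w_3=(0,-1)$, and
\[
\mc{P}=\bigl\{\emptyset,\{1\},\{2\},\{3\},\{1,3\},\{2,3\},J\bigr\}.
\]
Every proper $F\subsetneq J$ satisfies the stated biconditional: for $F=\{1,2\}$ there is no $h$, since $\langle h,w_1\rangle=\langle h,w_2\rangle=1$ forces $h_1=1$ and $h_1=-1$, and indeed $\{1,2\}\notin\mc{P}$; the remaining cases are routine. Yet $\mc{P}$ is not the face lattice of any polytope (three vertices force a triangle, which has three edges, not two). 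So the statement, read literally with the ``$=1$'' normalization, already fails, and any purported proof must break somewhere; yours breaks exactly at the unverified half of the biconditional after translation. A correct formulation replaces the linear functional normalized to $1$ by an affine one (a pair $(a,b)$ with $\langle a,w_j\rangle=b$ on $F$ and $<b$ off $F$), or equivalently adds a centering hypothesis on the $w_j$; with either adjustment your surjectivity argument goes through without any translation.
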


In contrast to Theorem \ref{tem:rmcp}, Theorem \ref{tem:pwrset} imposes essentially no combinatorial restrictions on $\mc{P}$, 
but this is compensated for by more stringent algebraic conditions.  To see how these differ, 
suppose we have found a set of vectors $w_j$ required for Theorem \ref{tem:pwrset}.  Let $W$ be the matrix with columns $w_j$, and let $H$ be the matrix with a column for each maximal proper subset $F_i \in \mc{P}$ given by the corresponding vector $h_i$.  Note that maximal proper subsets of $\mc{P}$ are the subsets of vertices contained in each facet of the resulting polytope.  With this, $H^*W$ is the facet-vertex matrix of the polytope, and the conditions of Theorem \ref{tem:pwrset} require that $H^*W$ be a filled 1-incidence matrix, like in Theorem \ref{tem:rmcp}, and since both $H$ and $W$ have rank at most $d$, $H^*W$ has rank at most $d$. 
Theorem \ref{tem:pwrset}, however, additionally requires us to 
find such vectors $h$ for all faces, and show that no vector $h$ exist for all other subsets of vertices. In prenex normal form, this later condition gives universal quantifiers. 
The algebraic part of Theorem \ref{tem:rmcp}, however, has only existential quantifiers in prenex normal form, which makes this a decision problem in the existential theory of the reals ETR in the number of facets and vertices $n+m$.

Often when we are able to reduce a problem to linear algebra it becomes computationally tractable, but the results shown here do not provide a polynomial time algorithm to determine whether a poset can be realized by a polytope or a cone.  
It is known that finding realizations of polytopes is ETR complete in $n+m$. 
Richter-Gebert showed ETR in $n+m$ is a lower bound for deciding realizability, even for just 4-polytopes, 
by encoding polynomial formulas into a polytope in a combinatorial way that determines its realizations \cite{RG}.  This construction also showed ETR in $|\mc{P}|$, the number of faces, is a lower bound. 
As such, the existence of a polynomial time algorithm for deciding realizability would imply that P=NP=ETR. 
For the upper bound, if we know that a given relation comes from the facets and vertices of a strongly regular spherical complex, then finding a rank $d$ filled 1-incidence matrix is sufficient for realizability, so ETR in $n+m$ is also an upper bound in this case.  Note that a strongly regular spherical complex is a lattice and diamond and flag connected. 
If we are given a general relation, however, the existence of such a matrix is not enough.  The rank of a poset and the diamond condition are easily decided in polynomial time in $|\mc{P}|$ and so is deciding for each face if the graph of faces it covers is connected, which is enough by Theorem \ref{tem:connect}, so polynomial in $|\mc{P}|$ $+$ ETR in $n+m$ is an upper bound for deciding the realizability of a general relation. 
Note that a $d$-simplex, for example, has $n+m = 2d+2$ facets and vertices, $|\mc{P}| = 2^{d+1}$ faces, and $(d+1)!$ flags, each of which grows rapidly in the previous quantity, so the distinction between these can be significant, but an $n$-gon, for example, has $n+m = 2n$, $|\mc{P}| = 2n+2$, and $2n$ flags, so these can be close.



The facet-vertex matrix of a polytope and the facet-ray matrix $N$ of a cone $C$ are related to the Gale dual. 
We can think of $N$ as a linear map from vectors $x \in \mb{R}^m$ of weights associated to each of the cone's generators, which we refer to as formal linear combinations, to $\mb{R}^n$.  If $x$ is in the closed positive orthant and $N=H^*W$ is a decomposition into unit facet normals and generators, $N$ first sends $x$ to a vector $Wx$ in the cone and then sends that to a vector $Nx = H^*Wx$ of distances from each facet.  
For the moment, consider a cone as just a set of points. 
Each point comes from an equivalence class of non-negative formal linear combinations, which are determined by the linear relations (dependencies) of the generators, the same relations as those of the columns of $N$.  
That is, the points of the cone come from $\mb{R}_{\geq 0}^m / \sim $ where $x\sim y$ when $x-y \in R = {\rm null}(N)$, meaning $Wx$ and $Wy$ are the same distance from each facet.  
Allowing negative weights, we can express each vector $v \in \mb{R}^d \supset C$ canonically as the formal linear combination $N^+ v \in \mb{R}^m$ with smallest norm, 
\[ N^+ v = \argmin_{\{x : N x = v \}} \|x\|, \] 
where $N^+$ is the pseudoinverse of $N$. 
This gives us a linear copy of the cone in $\mb{R}^m$ by projecting the positive orthant into the orthogonal compliment of the relations, $C \simeq C_0 = {\rm proj}_{R^\bot} \mb{R}_{\geq 0}^m = N^+N \mb{R}_{\geq 0}^m$, and gives a set of generators of $C_0$ by projecting the standard basis vectors into $R^\bot$.  The generators $x_j = N^+Ne_j$ of $C_0$ are the columns of $N^+N$.  
This also express each generator $w_j = We_j = Wx_j$ of $C$ as a linear combination of all generators of $C$, and as such associates a linear relation to each generator $r_j = e_j-x_j$. These relations $r_j$ are the columns of $I-N^+N$. 
The Gale dual of the cone is the vector arrangement $\{ r_j\}_{j\in[m]} \subset R$ of the relation associated to each generator, which is the orthogonal projection of the standard basis vectors into $R = {\rm null}(N)$.

Analogously in the case of polytopes, the facet-vertex matrix $M$ sends formal weighted averages of the vertices to signed lengths along the outward normals of each facet, 
and we can get the Gale dual of the polytope by either adding one more linear relation $\mb{1}$ to the null space of $M$, which requires the sum of the weights to be 1 for each formal weighted average of vertices, or equivalently take the Gale dual of the cone over the polytope in homogeneous coordinates $C = \mb{R}_{\geq 0} { P \brack 1}$.  
The Gale dual and some of its applications to polytopes are presented in \cite{zieg}.

We presented the Gale dual of a cone by first expressing the generators as a projection of the standard basis vectors of $\mb{R}^{m}$ into a linear subspace $R^\bot$, and then getting the Gale dual as the projection of the standard basis into $R$.  The Gale dual is more commonly presented, however, by extending 
$\mb{R}^{d+1} \supset C$ to $\mb{R}^m$.  In this case, 
instead of projecting the standard basis into $R^\bot$ and $R$ to get the generators and Gale dual, we find some orthonormal set of vectors $q_j=Qe_j$, such that 
projecting $q_j$ to the first $d+1$ coordinates gives the $j^{\rm th}$ generator of the cone, and projecting to the last $m-d-1$ coordinates gives the $j^{\rm th}$ vector of the Gale dual.  This difference amounts to an orthogonal change of coordinates by $Q$. 

If we choose generators and cogenerators of a cone that are unit length and find the full singular value decomposition of the facet-ray matrix $N =U\Sigma V^*$, we get the Gale dual of the cone along with the Gale dual of its polar. 
That is, from the compact singular value decomposition 
extend $U$ and $V$ to full orthogonal matrices and augment $\Sigma$ with $0$. 
The additional columns of $V$ give a $(m-d-1)$-vector for each generator, which is the Gale dual of $C$.  Also, the additional columns of $U$ give a $(n-d-1)$-vector for each facet, which is the Gale dual of the polar $C^*$. 
We also get the Gramian of a cone's generators and cogenerators together from the facet-ray matrix $N$ as follows. 
\[ \tilde{G}:= \left[ \begin{array}{cc}
\sqrt{NN^*} & N \\
N^* & \sqrt{N^*N} \\
\end{array} \right] =\left[ \begin{array}{cc}
U\Sigma U^* & U\Sigma V^* \\
V\Sigma U^* & V\Sigma V^* \\
\end{array} \right] =\left[ \begin{array}{c}
U \\
V \\
\end{array} \right] \Sigma \left[ \begin{array}{cc}
U^* & V^* \\
\end{array} \right]  \]
This perhaps innocuous looking formula relates several objects, a polytopal cone, its Gale dual, its dihedral angles, and its polar cone along with the Gale dual and dihedral angles of the polar. 
the Gramian of the cone is $\sqrt{NN^*}$, and the Gramian of its polar is $\sqrt{N^*N}$. 
That is, the entries of the first diagonal block are the inner products of the outward normals of the cone's facets, which are cosine of the dihedral angles.  Similarly the second diagonal block is that for the polar cone.  
It would be nice if we could add to this a geometric interpretation of the singular values $\Sigma$ of either $N$ or $M$.

\section*{Acknowledgments}

The author would like to thank Andreas Holmsen, Igor Rivin, G\"unter Ziegler, Sinai Robins, and anonymous reviewers for their comments.

This research was supported by NRF grant 2011-0030044 (SRC-GAIA) funded by the government of Korea.

\bibliographystyle{plain}
\bibliography{rmcartbib}

\end{document}